\documentclass[12pt, reqno]{amsart}

\usepackage{amsmath, amsthm, amscd, amsfonts, amssymb, graphicx, color}
\usepackage[bookmarksnumbered, colorlinks, plainpages]{hyperref}

 \makeatletter \oddsidemargin.9375in
\evensidemargin \oddsidemargin \marginparwidth 2in \makeatother

\newtheorem{theorem}{Theorem}[section]
\newtheorem{definition}[theorem]{Definition}
\newtheorem{cor}[theorem]{Corollary}
\newtheorem{lem}[theorem]{Lemma}
\newtheorem{prop}[theorem]{Proposition}

\newtheorem{remark}[theorem]{Remark}
\numberwithin{equation}{section}

\newcommand{\ch}{{\rm ch}}

\begin{document}
%
\title[Multiplicative spectral functions on some Banach function algebras]{Multiplicative spectral functions on some Banach function algebras}

\author{Nahid Bayati and Fereshteh Sady}

\address{Department of Pure Mathematics, Faculty of  Mathematical Sciences, Tarbiat Modares University, Tehran, 14115-134, Iran}

\vspace*{.25cm}

\email{nahid.bayati@modares.ac.ir, \, sady@modares.ac.ir}
\subjclass[2020]{Primary 46J10 and 46J20, Secondary 47A11}

\keywords{Multiplicative spectral functions, Gleason-Kahane-\.Zelazko Theorem, Uniform algebras, Banach function algebras, Peaking functions}

\maketitle


%
\begin{abstract}
In this paper, we study multiplicative functions $\varphi  \colon    A \to \Bbb C$ on a natural Banach function algebra $A$ on a compact Hausdorff space $X$,  such that  $\varphi(f)\in \sigma(f)$ for all $f\in A$. It is shown that for certain natural Banach function algebras $A$, either $\ker(\varphi)$ is a maximal ideal of $A$ or $1\in {\rm span}({\rm ker}(\varphi))$ (that is $1=f_1+f_2+\cdots f_n$ for some $f_1,..., f_n \in {\rm ker}(\varphi)$).   Then we investigate for the linearity of $\varphi$ in either of cases that $\varphi$ is continuous or $1\notin {\rm span}({\rm ker}(\varphi)$. We show that, for some natural Banach function algebras $A$, in either of these cases,   there exists a point $x_0\in X$ such that  $\varphi(f)=f(x_0)$ for some  family of functions $f\in A$ (including those functions $f\in A$ that $\overline{f}\in A$). In particular, such a multiplicative spectral function on some Banach algebras including $C(X)$, Lipschitz algebras, Banach algebras of absolutely continuous functions on $[0,1]$ and $C^1([0,1])$ is linear and hence it is a character.   
\end{abstract}

\maketitle
\section{Introduction}
For a unital complex Banach algebra $A$, let $G(A)$ be the group of all invertible elements of $A$,  $G_1(A)$ be the principle component of $G(A)$ and ${\rm exp}(A)$ be the subset  $\{e^a: a\in A\}$ of $G_1(A)$.  Given  $a\in A$,  let $\sigma(a)$ denote its  spectrum.   We say that a function (not necessarily linear) $\varphi \colon  A \to \Bbb C$ on a Banach algebra $A$ is spectral, if $\varphi(a) \in \sigma(a)$ for all $a\in A$. By the classical Gleason-Kahane-\.Zelazko theorem, any complex-linear spectral function $\varphi  \colon    A \to \Bbb C$ on a unital Banach algebra $A$ is a character on $A$.  The Kowalski-S\l odowski theorem \cite{Kow} provides an additive version of the Gleason-Kahane-\.Zelazko theorem by showing that any complex function $\varphi  \colon    A \to \Bbb C$ that satisfies $\varphi(a)+\varphi(b) \in \sigma(a+b)$ for all $a,b \in A$, is a character on $A$. Without the linearity assumption, the following result has been proven by A. Maouche for multiplicative spectral functions on a unital Banach algebra.  

 \begin{theorem} \cite{Ma} \label{Ma-th}
 Let $A$ be a (complex) unital Banach algebra and $\varphi  \colon    A \to \Bbb C$ be a multiplicative function such that $\varphi(a)\in \sigma(a)$ for all $a\in A$. Then there exists a unique character $\psi$ on $A$ such that $\psi(a)=\varphi(a)$ for all  $a\in G_1(A)$.  
 \end{theorem}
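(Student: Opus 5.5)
The plan is to use multiplicativity to reduce to the Gleason--Kahane--Żelazko theorem, or more precisely to its exponential form: a function $\psi$ with $\psi(e^{a})=e^{L(a)}$, $L$ linear and spectral.

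\textbf{Step 1: $\varphi$ is a homomorphism on $G_1(A)$.} For $a\in G(A)$ we have $\varphi(a)\in\sigma(a)$, so $\varphi(a)\neq 0$. Multiplicativity then makes $\varphi|_{G(A)}$ a group homomorphism $G(A)\to\Bbb C^*$.

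\textbf{Step 2: the one-parameter groups give a linear map.} Fix $a\in A$ and set $\chi_a(t)=\varphi(e^{ta})$ for $t\in\Bbb C$. Then $\chi_a\colon(\Bbb C,+)\to\Bbb C^*$ is a homomorphism with $\chi_a(t)\in\sigma(e^{ta})=e^{t\sigma(a)}$, and so $|\chi_a(t)|\le e^{|t|\,\|a\|}$. Continuity of $\chi_a$ is the main obstacle, since no continuity of $\varphi$ is assumed and homomorphisms $\Bbb C\to\Bbb C^*$ can be wildly discontinuous.

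To handle it, write $\chi_a(t)=e^{t\lambda(t)}$ with $\lambda(t)\in\sigma(a)$. I would get boundedness near $0$ from the spectral bound and use that for rational multiples $\chi_a(q t)=\chi_a(t)^q$ up to roots of unity. Concretely, $\log|\chi_a|$ is an additive function $\Bbb C\to\Bbb R$ which is bounded on a neighbourhood of $0$, hence it is $\Bbb R$-linear. For the argument part, use $\chi_a(t)^n=\chi_a(nt)\in e^{nt\sigma(a)}$. This forces $\chi_a(t)\in\bigcap_n \bigl(e^{nt\sigma(a)}\bigr)^{1/n}$, which for small $t$ pins $\chi_a(t)$ near $1$ and gives continuity at $0$, hence everywhere.

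A continuous homomorphism has the form $\chi_a(t)=e^{tL(a)}$, and evaluating at $t$ real and small shows $L(a)\in\sigma(a)$. To see that $L$ is linear, note that $L(sa)=sL(a)$ is immediate from the definition. Additivity comes from the Lie product formula $e^{t(a+b)}=\lim_n (e^{ta/n}e^{tb/n})^n$, but that needs continuity again. Instead I would argue spectrally: for each $t$,
\[
e^{tL(a)}e^{tL(b)}=\varphi(e^{ta}e^{tb})\in\sigma(e^{ta}e^{tb}).
\]
Expanding $\sigma(e^{ta}e^{tb})$ near $t=0$ places $L(a)+L(b)$ within $O(t)$ of $\sigma(a+b)$. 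Replacing $a,b$ by $a+\mu$, $b+\nu$ with scalars $\mu,\nu$ then gives the hypothesis of the Kowalski--Słodowski theorem, namely $L(a)+L(b)\in\sigma(a+b)$ for all $a,b$. That theorem makes $L=\psi$ a character.

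\textbf{Step 3: conclusion.} $G_1(A)$ is generated by finite products $e^{a_1}\cdots e^{a_k}$. Since both $\varphi$ and $\psi$ are multiplicative and agree on every $e^{a}$ by Step 2, they agree on all of $G_1(A)$. Uniqueness follows because a character is determined by its values on $\exp(A)$: $\psi(e^{ta})=e^{t\psi(a)}$, and differentiating at $t=0$ recovers $\psi(a)$.
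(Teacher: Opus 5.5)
The paper gives no proof of this theorem. It quotes it from Maouche \cite{Ma} and uses it only as a black box to produce the associated point $x_0$, so there is no internal argument to compare with. Judged on its own, your strategy yields a complete proof: define $L(a)$ from the one-parameter group $s\mapsto\varphi(e^{sa})$, show $L(a)+L(b)\in\sigma(a+b)$, and apply the Kowalski--S\l odkowski theorem (which the paper also quotes). Several statements in Step~2 are wrong or unnecessary, though, and should be fixed.

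First, continuity of $\chi_a$ is not an obstacle at all. The spectral condition $\chi_a(t)\in e^{t\sigma(a)}$ already gives
$|\chi_a(t)-1|\le\sup_{\lambda\in\sigma(a)}|e^{t\lambda}-1|\le e^{|t|\,\|a\|}-1$.
So $\chi_a$ is continuous at $0$, hence everywhere, and the discussion of $\log|\chi_a|$ and $n$-th roots can be dropped.

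Second, and this is an actual error: a continuous homomorphism $(\Bbb C,+)\to\Bbb C^*$ has the form $t\mapsto e^{\alpha\,{\rm Re}\,t+\beta\,{\rm Im}\,t}$, not necessarily $e^{tL}$. The spectral condition on a single element does not force $\beta=i\alpha$. For example, in $C(\Bbb T)$ with $a(z)=z$, the map $\chi(t)=e^{\overline{t}}$ satisfies $\chi(t)\in e^{t\sigma(a)}$ for every $t$ (take $z=\overline{t}/t$). So your claim that ``$L(sa)=sL(a)$ is immediate'' holds only for real $s$. This does not damage the proof, provided you work with real $t$ throughout. Define $L(a)$ by $\varphi(e^{sa})=e^{sL(a)}$ for $s\in\Bbb R$. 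The additivity step needs only real $t$, and Kowalski--S\l odkowski then supplies complex linearity.

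Third, ``within $O(t)$ of $\sigma(a+b)$'' is not justified, since the spectrum is only upper semicontinuous, with no rate. You need neither a rate nor the shift by scalars. Write $e^{ta}e^{tb}=1+tc_t$ with $c_t\to a+b$, and set $\lambda=L(a)+L(b)$. Then for real $t\neq 0$ you have $\mu_t:=(e^{t\lambda}-1)/t\in\sigma(c_t)$, and $\mu_t\to\lambda$. If $\lambda\notin\sigma(a+b)$, then $a+b-\lambda\in G(A)$, and openness of $G(A)$ would make $c_t-\mu_t$ invertible for small $t$, a contradiction. Hence $L(a)+L(b)\in\sigma(a+b)$ exactly, for all $a,b\in A$, which is precisely the Kowalski--S\l odkowski hypothesis.

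Step~3 is fine; use automatic continuity of characters to get $\psi(e^a)=e^{\psi(a)}$.
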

 The example given in \cite{Ma} shows that the above multiplicative spectral function $\varphi$ is not necessarily linear. However, under the continuity assumption,  there are some recent results concerning the linearity of such multiplicative functions. For a compact Hausdorff space $X$, it was shown in  \cite{Brits-CX} any continuous multiplicative spectral function on the Banach algebra $C(X)$ of all continuous complex-valued  functions on $X$, is a character and, in particular,  it is linear.   More generally, in \cite{Brits-Cstar} the same result has been obtained for continuous multiplicative spectral functions on  arbitrary $C^*$-algebras.  Finally, it was shown in \cite{Brits-Her}, that if $A$ is a Hermitian Banach algebra, then any continuous multiplicative spectral function $\varphi   \colon    A \to \Bbb C$ is a character, as well.   We note that such results  can be considered as  multiplicative versions of the Gleason-Kahane-\.Zelazko  theorem. We also refer to \cite{Brits-pre}, in which a  multiplicative Kowalski-S\l odkowski theorem  has been proven for a Hermitian   algebra $A$, i.e. it was shown  that  if $\varphi: A \to \Bbb C$ is a continuous function satisfying $\varphi(x) \varphi(y) \in \sigma(xy)$ for all $x,y\in A$, then either $\varphi$ or $-\varphi$ is a character on $A$.

We should note that for a compact Hausdorff space $X$, it was shown in \cite{Bayati-Sady} that for a  multiplicative spectral function $\varphi: C(X) \to \Bbb C$  (not assumed to be continuous), either ${\rm ker}(\varphi)$ is a maximal ideal or $1\in {\rm span}({\rm ker}(\varphi)) $  and  $\varphi$ is linear in either of cases that $\varphi$ is continuous or $1\notin {\rm span}({\rm ker}(\varphi)) $. 

In this paper,  we improve the results of \cite{Bayati-Sady}. We consider the case that $A$ is a natural Banach function algebra on a compact Hausdorff space $X$ and study multiplicative spectral functions $\varphi  \colon    A \to \Bbb C$.  In the lack of continuity assumption, we first show that for certain natural Banach function algebras $A$, either  $\ker(\varphi)$ is a maximal ideal of $A$ or it spans $A$, i.e.  $1=f_1+\cdots f_n$ for some $f_1,..., f_n\in \ker(\varphi)$. 
 Moreover, in the uniform algebra case, there exists a point $x_0\in X$ such that $\varphi(f)=f(x_0)$ for all $f \in G(A)$ (Proposition \ref{Main-P}).  Then we investigate for the linearity of $\varphi$ whenever either $1\notin \ker(\varphi)$ or $\varphi$ is continuous. Indeed, we prove two theorems 
 (Theorems \ref{Main1} and \ref{Main2})   providing a point $x_0\in X$ such that $\varphi(f)=f(x_0) $ for some family of functions $f\in A$, including those functions $f\in A$ with  $\overline{f}\in A$.  The first theorem concludes  a result concerning the linearity of multiplicative spectral functions on the Banach algebras such as $C(X)$, Lipschitz algebras and also the Banach algebra of all absolutely continuous functions on $[0,1]$. The second theorem provides another  linearity result for some Banach function algebras such as  $C^1([0,1])$.    Our approach is based on using peaking functions, strong boundary points  and their properties in uniform algebras and Banach function algebras.

\section{Preliminaries}
For a compact Hausdorff space $X$, we denote the Banach algebra of all continuous  complex-valued functions on $X$ by $C(X)$. The supremum norm of a function $f\in C(X)$ will be denoted by $\|f\|_X$. A  point separating subalgebra  $A$ of $C(X)$  which contains the constant functions is called a Banach function algebra on $X$ if it is a Banach algebra under some norm $\|\cdot\|$.   In the case that the norm of a Banach function algebra $A$ on $X$ is the same supremum norm, we call it a uniform algebra on $X$. For a Banach function algebra $A$ on $X$, $\overline{A}$ is the uniform closure of $A$.  
It is easy to see that the norm $\|\cdot \|$ of a Banach function algebra on $X$ satisfies the inequality $\|\cdot \|\ge \|\cdot \|_X$.   A Banach function algebra $A$ on $X$ is called natural if  each character on $A$ is an evaluation homomorphism at some point of $X$. 

For a subspace $A$ of $C(X)$, let $A^*$ denote its dual space (with respect to the supremum norm).  For each $x\in X$, $\varphi_x$ is the evaluation functional on $A$ at $x$ and the Choquet boundary $\ch(A)$ of $A$ consists of all points $x\in X$ such that $\varphi_x$ is an extreme point of the closed unit ball of $A^*$.  It is well-known that $\ch(A)$ is a boundary for $A$, that is for each $f\in A$ there exists a point $x_0\in \ch(A)$ such that $|f(x_0)|=\|f\|_X$. A point $x_0\in X$ is called a strong boundary point of $A$ if for each neighborhood $U$ of $x_0$ and $\varepsilon>0$  there exists a function $f\in A$ such that $f(x_0)=1=\|f\|_X$ and $|f(x)|\le \varepsilon$ for all $x\in X\setminus U$.   
   We denote the set of all strong boundary points of $A$ by $\Theta(A)$. 
It should be noted  that $\Theta(A)\subseteq \ch(A)$   and in the  uniform algebra case  we have $\Theta(A)=\ch(A)$ \cite[Theorems 2.2.1, 2.3.4]{Brow}. 

Clearly for a compact Hausdorff space $X$, the uniform algebra $C(X)$ is natural and its Choquet boundary is the same $X$.  For an example of a nontrivial natural uniform algebra $A$ on a compact (metric) space $X$ with $\ch(A)=X$ we can refer to the Cole example (see \cite[Page 255]{Brow}). Here are some examples of natural Banach function algebras $A$ with $\Theta(A)=\ch(A)=X$: 

(i) The Banach algebra ${\rm Lip}(X)$ consisting of all  complex Lipschitz functions on a compact metric space $X$ endowed with the norm $\|f\|=\|f\|_X+p(f)$ where $p(f)$ is the Lipschitz constant of $f\in {\rm Lip}(X)$. 

(ii) The Banach algebra $C^1([0,1])$ of all continuously differentiable functions on the unit interval $[0,1]$ endowed with the norm $\|f\|=\|f\|_{[0,1]}+\|f'\|_{[0,1]}$. 

(iii) The Banach algebra ${\rm AC}([0,1])$ of all absolutely continuous functions on $[0,1]$ under the norm 
$\|f\|=\|f\|_{[0,1]}+{\rm var}(f)$ where ${\rm var}(f)$ is the total variation of $f\in {\rm AC}([0,1])$. 

Let $A$ be a subspace of $C(X)$. A function $f\in A$  is called a peaking function of $A$, if $1\in f(X)\subseteq  {\rm int}(\Bbb D) \cup \{1\}$ where $\Bbb D$ is the closed unit disk in the complex plane. 
For $x_0\in A$, we use the notation $P_{x_0}(A)$ for  the set of all peaking functions $f \in A$ with $f(x_0)=1$. Such a function $f$ is called a peaking function at $x_0$. Hence for a peaking function $f\in A$ at $x_0$, its  maximum set $M(f)=\{x\in X:  |f(x)|=\|f\|_X\}$ is a closed subset of $X$  containing $x_0$ such that  $|f(y)|< 1$ for all $y\in X \setminus M(f)$.   A  point $x_0\in X$ is called  a peak point of $A$ if there exists a peaking function $f\in P_{x_0}(A)$ with $M(f)=\{x_0\}$. 

We should note that if the subspace $A$ of $C(X)$ contains the constant function 1, then for each $\varepsilon>0$,  $x_0\in \Theta(A)$ and open neighborhood $U$ of $x_0$,  replacing $f$ by $\frac{1+f}{2}$, we can find a function $f\in P_{x_0}(A)$ such that $|f| < \varepsilon$ on $X \setminus U$.


The following lemma is easily verified. For the sake of completeness we state its proof. 
\begin{lem}\label{eq-inf}
Let $A$ be a Banach function algebra on $X$. Then  for each $x_0\in \ch(A)$ and $f\in A$ we have 
\begin{equation}\label{st-inf} 
 |f(x_0)|=\inf\{\|fh\|_{X}: h\in P_{x_0}(\overline{A})\cap {\rm exp}(\overline{A})\}
\end{equation}
\end{lem}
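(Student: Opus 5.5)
The inequality ``$\le$'' is immediate: every $h\in P_{x_0}(\overline{A})$ satisfies $h(x_0)=1$. Hence for any such $h$ we get $|f(x_0)|=|f(x_0)h(x_0)|\le \|fh\|_X$, and taking the infimum gives $|f(x_0)|\le \inf\{\cdots\}$. All the work is in the reverse inequality. For that I will show that for each $\varepsilon>0$ there is some $h\in P_{x_0}(\overline{A})\cap{\rm exp}(\overline{A})$ with $\|fh\|_X\le |f(x_0)|+\varepsilon$.

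The first step is to pass to the uniform algebra $\overline{A}$. Since $\|\cdot\|\ge\|\cdot\|_X$, the Choquet boundary depends only on the supremum norm. The dual ball of $A$ with the sup norm and that of $\overline{A}$ are identified via Hahn--Banach/density, so $x_0\in\ch(\overline{A})$. By the facts quoted from \cite{Brow}, in the uniform algebra case $\ch(\overline{A})=\Theta(\overline{A})$, so $x_0$ is a strong boundary point of $\overline{A}$.

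Now fix $\varepsilon>0$. Let $U=\{x\in X:|f(x)|<|f(x_0)|+\varepsilon\}$, which is an open neighborhood of $x_0$. Put $M=\|f\|_X+1$ and choose $\delta>0$ small, to be specified. By the remark after the definition of peaking functions (applied to $\overline{A}$, which contains $1$), there is $g\in P_{x_0}(\overline{A})$ with $|g|<\delta$ on $X\setminus U$. The difficulty is that $g$ need not be invertible, let alone an exponential. I expect this to be the main obstacle, and I fix it by composing with a map of the disk. Since $g(X)\subseteq{\rm int}(\Bbb D)\cup\{1\}$, the function $k=(1+g)/2$ takes values in the disk $\{|z-1/2|\le 1/2\}$. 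Moreover $k(x_0)=1$, $|k|<1$ off $M(g)$, and $k$ is bounded away from $0$ only where $g\neq -1$; here $g\ne -1$ everywhere except possibly at peak value issues, but in fact $|g|<1$ or $g=1$, so ${\rm Re}\,k>0$ on $X$.

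Since ${\rm Re}\,k>0$ and $X$ is compact, $k(X)$ lies in the right half plane. The principal logarithm is holomorphic there, so by the holomorphic functional calculus in $\overline{A}$ (the spectrum of $k$ is $k(X)$ or at least lies in the closed disk minus possibly $0$; to be safe use $k_t=(1-t)+tk$-type adjustments, or simply note the spectrum in $\overline{A}$ is contained in the polynomially convex hull of $k(X)$, which is inside the disk $\{|z-1/2|\le1/2\}$, and I can shrink to $\{|z-1/2|\le 1/2\}\setminus\{0\}$ by first replacing $k$ by $(1+k)/2$, whose hull avoids $0$). This gives $k=e^{\log k}\in{\rm exp}(\overline{A})$. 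Then I take $h=k^n$ for large $n$. This $h$ is still a peaking function at $x_0$ lying in ${\rm exp}(\overline{A})$. On $X\setminus U$ we have $|k|\le (1+\delta)/2$ (or $3/4+\delta/4$ after the extra averaging), so $|h|\le c^n$ with $c<1$ there, and $n$ can be chosen with $Mc^n<|f(x_0)|+\varepsilon$. On $U$ we have $|fh|\le|f|<|f(x_0)|+\varepsilon$. Hence $\|fh\|_X\le|f(x_0)|+\varepsilon$, which gives the reverse inequality; in fact the smallness of $g$ off $U$ is not even needed, since powering does the job.
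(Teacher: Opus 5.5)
Your proof is correct and follows the paper's route: you pass to $\ch(\overline{A})=\Theta(\overline{A})$, take a peaking function at $x_0$ that is small off $U=\{|f|<|f(x_0)|+\varepsilon\}$, and raise an exponential version of it to a high power. The only difference is that the paper uses $h=e^{-n(1-u)}$, which lies in ${\rm exp}(\overline{A})$ by construction, so your polynomial-hull/functional-calculus detour (needed because $\overline{A}$ need not be natural) can be avoided; your closing remark is also overstated, since for $g\equiv 1\in P_{x_0}(\overline{A})$ powering does nothing --- you do need $\sup_{X\setminus U}|g|<1$ (any $\delta<1$ works), and that is exactly where the strong boundary point property enters.
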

  \begin{proof}
  The equality is obvious if $f=0$. So we assume that $f$ is nonzero. 
Clearly for each $h\in P_{x_0}(\overline{A})\cap {\rm exp}(\overline{A})$ we have $|f(x_0)|\le \|fh\|_X$. Now assume  that  $x_0\in \ch(A)$. Given $\epsilon\in (0,1)$, we set $U=\{x\in X: |f(x)|< |f(x_0)|+ \epsilon\}$. Clearly $U$  is a neighborhood of $x_0$.  We note that $ \ch(A)=\ch(\overline{A})$ and since $\overline{A}$ is a uniform algebra on $X$ we have $\ch(\overline{A})=\Theta(\overline{A})$.  Hence  there exists a function $u\in P_{x_0}(\overline{A})$ such that  $|u|<\epsilon$ on $X\setminus U$. Then for  any $n\in \Bbb N$ with $e^{-n(1-\varepsilon)}< \frac{\epsilon}{\|f\|_X}$,  the function $h=e^{-n (1-u)}$  is an element of $ P_{x_0}(\overline{A})\cap {\rm exp}(\overline{A})$ such that   $|h|< \frac{\epsilon}{\|f\|_X}$ on $X\setminus U$. An easy verification shows that 
$\|fh\|_X \le |f(x_0)|+ \epsilon$, which proves the desired equality.  
   \end{proof}
The following lemma, called multiplicative Bishop's lemma, has been proven in \cite{Hat}. 
\begin{lem}\label{Mult-Unif}
Let $A$ be a uniform algebra on a compact Hausdorff space $X$ and $x_0 \in \ch(A)$. Then for each $f\in A$ with $f(x_0)\neq 0$ there exists $g\in P_{x_0}(A)$ such that $\frac{1}{f(x_0)}fg \in P_{x_0}(A)$. 
\end{lem}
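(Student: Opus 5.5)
We need to prove the multiplicative Bishop lemma (Lemma \ref{Mult-Unif}): for a uniform algebra $A$, $x_0\in\ch(A)$ and $f\in A$ with $f(x_0)\neq 0$, there is $g\in P_{x_0}(A)$ with $\frac{1}{f(x_0)}fg\in P_{x_0}(A)$.

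First normalize: replace $f$ by $f/f(x_0)$, so that $f(x_0)=1$. We then need $g\in P_{x_0}(A)$ such that $fg$ is also a peaking function at $x_0$. That is, $fg(x_0)=1$ and $|fg(x)|<1$ whenever $fg(x)\neq 1$, and the same must hold for $g$ itself.

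The plan is to construct $g$ as a convergent product or series of peaking functions, using the fact that $\Theta(A)=\ch(A)$ for uniform algebras. We exhaust $X\setminus\{x\in X:|f(x)|\le 1\}$ layer by layer.

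\begin{enumerate}
\item[(1)] Set $U_n=\{x\in X: |f(x)-1|<2^{-n}\}$. These are open neighborhoods of $x_0$. Choose $u_n\in P_{x_0}(A)$ with $|u_n|<\varepsilon_n$ off $U_n$, where $\varepsilon_n$ is small and chosen in terms of $\|f\|_X$. As in the proof of Lemma \ref{eq-inf}, we may pass to exponentials $e^{-k(1-u_n)}$ when we need extra smallness off $U_n$ while keeping the value $1$ at $x_0$ and modulus at most $1$ everywhere.

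\item[(2)] Define $g=\sum_n 2^{-n}h_n$, where $h_n$ are peaking functions at $x_0$ built from the $u_n$. A convex combination of elements of $P_{x_0}(A)$ is again in $P_{x_0}(A)$, since $|g(x)|=1$ forces $h_n(x)=1$ for all $n$. Hence $g\in P_{x_0}(A)$ automatically.

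\item[(3)] It remains to verify that $|fg|\le 1$ everywhere, with equality only where $fg=1$. Take a point $x$ with $x\in U_{m-1}\setminus U_m$. The terms with $n\le m-1$ have $|h_n|\le 1$. Here $|f(x)|\le 1+2^{-m+1}$, but also $f(x)\neq 1$ with $|f(x)-1|\ge 2^{-m}$. The tail terms with $n\ge m$ are tiny, since $|h_n(x)|<\varepsilon_n$.
\end{enumerate}

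The main obstacle is step (3). Ensuring $|fg(x)|<1$ requires $|g(x)|\le 1-c\,2^{-m}$ with a margin that beats $|f(x)|\le 1+2^{-m+1}$. The tail alone gives $|g(x)|\le 1-2^{-m+1}+\text{small}$, which is borderline against the factor $1+2^{-m+1}$. So the weights and neighborhoods must be tuned: use geometric weights $2^{-n}$ but neighborhoods $U_n$ defined with a much faster-shrinking radius, say $\delta_n$ with $\delta_{n-1}\ll 2^{-n}$. Then $|f(x)|\le 1+\delta_{m-1}$ while $|g(x)|\le 1-2^{-m}(1-\varepsilon)$, and the product is strictly less than $1$.

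The remaining points are routine. On $\bigcap_n U_n=\{f=1\}$ we have $|fg|=|g|\le 1$, with equality only where $g=1$, and then $fg=1$. Where $|g(x)|=1$ but $g(x)\neq 1$, this cannot occur because $g$ is a peaking function. The exact bookkeeping of the $\delta_n$ and $\varepsilon_n$ is the only delicate part.
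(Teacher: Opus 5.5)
Your proof is correct, but there is no proof in the paper to compare it with: the paper quotes this lemma from \cite{Hat}, together with the strengthening in Remark \ref{Rem-1}(i) that $g$ can be taken in ${\rm exp}(A)$. Your layer-by-layer construction is therefore a self-contained substitute, and its ingredients check out:
\begin{itemize}
\item $x_0\in\Theta(A)$, since $\ch(A)=\Theta(A)$ for uniform algebras.
\item The functions $e^{-k(1-u)}$ remain peaking at $x_0$ while becoming uniformly small off $U_n$.
\item $\sum_n 2^{-n}h_n$ lies in $A$ because $A$ is uniformly closed, which is exactly the point of Remark \ref{Rem-1}(ii).
\item Strict convexity of the disc makes $g$ peaking.
\item With $\delta_{m-1}<2^{-m}(1-\varepsilon)$ you get $(1+\delta_{m-1})(1-2^{-m}(1-\varepsilon))<1$ on $U_{m-1}\setminus U_m$.
\end{itemize}
The one layer your step (3) does not literally treat is $X\setminus U_1$, where $f$ is bounded only by $\|f\|_X$. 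There you need $|g|\le\sup_n\varepsilon_n<1/\|f\|_X$, and that is what choosing $\varepsilon_n$ ``in terms of $\|f\|_X$'' has to mean.

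What your version does not give is the ${\rm exp}(A)$ strengthening, and that is the form the paper actually uses. In Proposition \ref{Main-P}(iii) and Corollary \ref{cor-zero} one needs $\varphi(g)=g(x_0)$, which is known only on ${\rm exp}(A)$. The same strengthening is what places uniform algebras among the algebras with the Bishop property. Nothing prevents your arithmetic mean $\sum_n 2^{-n}h_n$ from vanishing at points away from $x_0$, so your $g$ need not even be invertible.

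Your scheme upgrades with no new idea if you take a geometric mean instead. Let $u_n(x_0)=1=\|u_n\|_X$ with $|u_n|\le\varepsilon$ off $U_n$, and choose weights $c_n>0$ with $\sum_n c_n<\infty$. Set $g=\exp\bigl(-\sum_n c_n(1-u_n)\bigr)$, which lies in ${\rm exp}(A)$. Then:
\begin{itemize}
\item $|g|=\exp\bigl(-\sum_n c_n(1-{\rm Re}\,u_n)\bigr)\le 1$, with equality only where every $u_n=1$, and there $g=1$.
\item Off $U_m$ one has $|g|\le\exp\bigl(-(1-\varepsilon)\sum_{n\ge m}c_n\bigr)$.
\end{itemize}
First take $c_1$ large relative to $\|f\|_X$, and then take $\delta_{m-1}$ small relative to $\sum_{n\ge m}c_n$ (only after that choose $U_n$ and $u_n$). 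The same estimates then give $fg\in P_{x_0}(A)$ with $g\in P_{x_0}(A)\cap{\rm exp}(A)$.
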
 

\begin{remark}\label{Rem-1} {\em 
i) By \cite[Page 283]{Hat}, we may choose the desired function $g$ in the above lemma in such a way that $g$ is  an element of ${\rm exp}(A)$. 

ii) We should note that the closedness of $A$ in $C(X)$ in the above  lemma has an important role in its  proof. However, as the next lemma shows,  in some Banach function algebras like the Banach algebra of Lipschitz functions on a compact metric space or absolutely continuous functions on the unit interval,  we may obtain directly the function $g$ with the desired properties.   
}
\end{remark}
\begin{lem} \label{Mult-Lip}
Let $A$ be the Banach function algebra ${\rm Lip}(X)$ for some compact metric space $(X,d)$.   Then for each point  $x_0\in X$  and $f\in A$ with $f(x_0)\neq 0$ there exists a nonnegative peaking function $g\in P_{x_0}(A)$ such that $\frac{1}{f(x_0)}fg \in P_{x_0}(A)$. The same conclusion holds for $A={\rm AC}(X)$ where $X=[0,1]$. 
\end{lem}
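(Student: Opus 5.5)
Normalize first: replacing $f$ by $f/f(x_0)$, we may assume $f(x_0)=1$, and we must find a nonnegative $g\in P_{x_0}(A)$ with $fg\in P_{x_0}(A)$. The natural candidate is a radial bump built from the distance function. Since $f$ is continuous with $f(x_0)=1$, choose $r>0$ such that $|f(x)-1|<1/2$ on the ball $B(x_0,r)$. On this ball we have the Lipschitz estimate $|f(x)|\le 1+L\,d(x,x_0)$, where $L=p(f)$. Accordingly we take
\[
g(x)=\max\{0,\,1-c\,d(x,x_0)\},
\]
with the constant $c>0$ to be fixed. This $g$ lies in ${\rm Lip}(X)$, satisfies $0\le g\le 1$, $g(x_0)=1$, $g<1$ off $x_0$, and $g=0$ outside $B(x_0,1/c)$. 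Hence $g\in P_{x_0}(A)$ and is nonnegative.

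The key step is to check that $fg\in P_{x_0}(A)$, that is, $|f(x)g(x)|<1$ for all $x\neq x_0$ (with $fg(x_0)=1$). Outside $B(x_0,1/c)$ we have $fg=0$. Write $t=d(x,x_0)$ with $0<t<1/c$, and take $1/c\le r$. Then
\[
|f(x)g(x)|\le (1+Lt)(1-ct)=1+(L-c)t-Lct^2 .
\]
Choosing $c>\max\{L,1/r\}$ makes this strictly less than $1$ for every $t>0$. Thus $fg$ peaks at $x_0$ with value $1$, and $fg\in{\rm Lip}(X)$ because ${\rm Lip}(X)$ is an algebra. In fact $M(fg)=\{x_0\}$, which is stronger than what is required. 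I expect no real obstacle in the Lipschitz case: the only care needed is to use the global Lipschitz constant so that $|f(x)|\le|f(x_0)|+L\,d(x,x_0)$ holds, which is immediate.

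For $A={\rm AC}([0,1])$ the same shape of argument works, but the main obstacle is that an absolutely continuous $f$ need not have a bounded difference quotient at $x_0$, so no linear tent $1-c|x-x_0|$ suffices in general. Instead I would use the modulus $\omega(t)=\sup_{|x-x_0|\le t}|f(x)-1|$. This is continuous and nondecreasing with $\omega(0)=0$, and we want a nonnegative $g$ with
\[
g(x)\le \frac{1-\eta(|x-x_0|)}{1+\omega(|x-x_0|)}
\]
for some strictly positive $\eta$ away from $0$. The plan is to take
\[
g(x)=\max\{0,\,1-2\,\Omega(|x-x_0|)-|x-x_0|\},
\]
where $\Omega$ is a \emph{nondecreasing absolutely continuous} majorant of $\omega$ with $\Omega(0)=0$. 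The natural choice is $\Omega(t)=\int_{|s-x_0|\le t}|f'(s)|\,ds$, the variation of $f$ over $[x_0-t,x_0+t]$. This is absolutely continuous in $t$, dominates $\omega(t)$, and the composition $x\mapsto\Omega(|x-x_0|)$ is absolutely continuous on $[0,1]$ because it is monotone on each side of $x_0$ and is an integral of an $L^1$ function there.

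With this $g$, for $x\neq x_0$ near $x_0$ we get
\[
|fg|\le (1+\Omega)(1-2\Omega-t)<1,
\]
since $(1+\Omega)(1-2\Omega)\le 1$ and the term $-t(1+\Omega)$ makes the inequality strict. Also $g=0$ once $2\Omega+t\ge 1$. Hence both $g$ and $fg$ belong to $P_{x_0}({\rm AC}([0,1]))$, and $g\ge 0$.
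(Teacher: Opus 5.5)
Your proof is correct, but it takes a different route from the paper's.

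\textbf{The paper's construction.} It uses one construction for both algebras. Let $h_{x_0}=1-d(\cdot,x_0)/{\rm diam}(X)$, and let $g_0$ be a Lipschitz cutoff with values in $[0,1]$, equal to $1$ on $\{|f|\le 1\}$ and $0$ on $\{|f|\ge 2\}$. The paper then sets $g=\min(2-|f|,1)\,h_{x_0}\,g_0$. The elementary inequality $s\min(2-s,1)\le 1$ for $0\le s\le 2$ gives $|fg|\le h_{x_0}<1$ off $x_0$. This damping is tied to the \emph{values} of $|f|$, not to the distance from $x_0$, so no control of how fast $|f|$ grows near $x_0$ is needed. The only facts used are that the algebra contains the Lipschitz functions and is closed under $f\mapsto |f|$ and truncation by $\min$. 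That is why the ${\rm AC}([0,1])$ case is a one-line remark in the paper.

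\textbf{Your construction.} Yours is distance-based: the tent must decay faster than $|f|$ can rise. In ${\rm Lip}(X)$ this costs nothing, since you just take $c>p(f)$. In ${\rm AC}([0,1])$, as you correctly observe, a linear tent fails (e.g.\ $f=1+\sqrt{x}$ at $x_0=0$). This forces you to build the local-variation majorant $\Omega$ and check that $x\mapsto\Omega(|x-x_0|)$ is absolutely continuous. You do this correctly: on each side of $x_0$ it is the integral of $|f'(s)|+|f'(2x_0-s)|$, restricted to where $2x_0-s\in[0,1]$. In exchange, your $g$ is localized in a ball around $x_0$ and depends on $f$ only through a modulus of $f$ at $x_0$; in the Lipschitz case this is just $p(f)$.

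Both constructions give $M(g)=M(fg)=\{x_0\}$. As a minor point, the auxiliary radius $r$ and the factor $2$ in front of $\Omega$ are unnecessary: $c>p(f)$ and $g=\max\{0,1-\Omega(|x-x_0|)-|x-x_0|\}$ already work.
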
 
\begin{proof}
First consider the case that $A={\rm Lip}(X)$. We may assume that $f(x_0)=1$.  Given $x_0\in X$, the function $h_{x_0}  \colon    X \to  [0,1]$ defined by $h_{x_0}(x)= 1- \frac {d(x, x_0)}{{\rm diam}(X)}$ is a peaking function of $A$ with $M(h_{x_0})=\{x_0\}$, that is $x_0$ is a peak point of $A$. We can choose easily  a function $g_0\in A$  with values in $[0,1]$ such that $g_0=1$ on the set $\{x\in X: |f(x)|\le 1\}$ and $g_0=0$ on $\{x:  |f(x)| \ge 2\}$. Now let  $g:X \to [0,1]$ be defined by $g={\rm min } (2-|f(x)|,1) h_{x_0} g_0$. Then an easy verification shows that $g$ is a peaking function in $A$ satisfying the desired properties.   

The same proof works for  $A={\rm AC}([0,1])$, since any Lipschitz function is absolutely continuous. 
\end{proof}

\section{Main Result}
 Throughout this section, unless otherwise is stated, we assume that $A$ is a natural Banach function algebra on a compact Hausdorff space $X$.  We put  $A_+=\{f\in A: f\ge 0\}$. As we noted before, by a spectral function on $A$ we mean a (not necessarily continuous)  function  $\varphi \colon A \to \Bbb C$ satisfying   $\varphi(f) \in \sigma(f)$ for all $f \in A$.  Clearly in this case we have $\sigma(f)=f(X)$ for all $f\in A$. By   Theorem \ref{Ma-th}  for each multiplicative spectral function $\varphi \colon  A\to \Bbb C$ there exists a unique point $x_0\in X$ such that $\varphi(e^f)=e^{f(x_0)}$ for all $f \in A$. We call $x_0$ the associated point of $\varphi$. 
 
 We begin with the following proposition concerning the kernel of a  multiplicative spectral function on $A$.

\begin{prop}\label{Main-P}
Let   $\ch(A)=X$  and    $\varphi:A \to \Bbb C$ be a (not necessarily continuous) multiplicative spectral function.  Then the following statements hold. 

(i) There exists a unique  point $x_0\in X$ such that 
$|\varphi(f)|\le |f(x_0)|$ for all $f\in A$. In particular,  $|\varphi(f)|=|f(x_0)|$ for all $f\in G(A)$. 

(ii) Either $\ker(\varphi)$ is a maximal ideal of $A$ or  $1=f_1+ \cdots +f_n$ for some $f_1, ..., f_n \in \ker(\varphi)$.  

(iii) In the case that  $A$  is a uniform algebra, we have  $\varphi(f)=f(x_0)$ for all $f \in G(A)$. 
\end{prop}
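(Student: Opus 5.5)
The plan is to obtain $x_0$ from Theorem \ref{Ma-th} together with naturality: the character $\psi$ agreeing with $\varphi$ on $G_1(A)$ is evaluation at some $x_0\in X$. In particular $\varphi(e^g)=e^{g(x_0)}$ for all $g\in A$.

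\textbf{Part (i).} I would combine multiplicativity, the spectral property and Lemma \ref{eq-inf}. Take $f\in A$ and $h\in P_{x_0}(A)\cap{\rm exp}(A)$. Then $\varphi(h)=h(x_0)=1$, so $\varphi(fh)=\varphi(f)$. Since $\varphi(fh)\in\sigma(fh)=(fh)(X)$, this gives $|\varphi(f)|\le\|fh\|_X$.

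Lemma \ref{eq-inf} uses $h$ from $P_{x_0}(\overline{A})\cap{\rm exp}(\overline{A})$, whereas I need $h\in A$. The fix is approximation. With $u\in P_{x_0}(\overline A)$ as in that proof, pick $v\in A$ close to $u$ uniformly, normalise so that $v(x_0)=1$, and put $h=e^{-n(1-v)}\in{\rm exp}(A)$. Then $h(x_0)=1$, $|h|\le e^{n\|u-v\|_X}$ is close to $1$ everywhere, and $h$ is tiny off $U$ once $\|u-v\|_X$ is small relative to $1/n$. The same estimate then gives $\|fh\|_X\le|f(x_0)|+2\epsilon$. Note that $h$ need not be an honest peaking function; I only need $\varphi(h)=1$, which holds because $h\in{\rm exp}(A)$ and $h(x_0)=1$. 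Hence $|\varphi(f)|\le|f(x_0)|$.

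For invertible $f$, apply the inequality to $f^{-1}$ as well. Since $\varphi(1)=1$ (it is $\psi$ on $G_1$), multiplicativity gives $\varphi(f^{-1})=\varphi(f)^{-1}$, hence $|\varphi(f)|=|f(x_0)|$.

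For uniqueness, suppose $y_0\ne x_0$ also works. Take $g\in A$ with $g(x_0)=0$ and $g(y_0)=1$. Then $e^{g}$ gives $|e^{g(x_0)}|=|e^{g(y_0)}|$. Replacing $g$ by $tg$ for real $t$ yields $e^{0}=e^{t}$ for all $t$, a contradiction.

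\textbf{Part (ii).} Let $M=\{f: f(x_0)=0\}$. By (i), $M\subseteq\ker\varphi$. If $\ker\varphi=M$, it is a maximal ideal. Otherwise pick $f\in\ker\varphi$ with $f(x_0)\ne0$; normalise to $f(x_0)=1$. Then $1-f\in M\subseteq\ker\varphi$, so $1=f+(1-f)$ with both summands in $\ker\varphi$, and we can take $n=2$. I don't expect trouble here.

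\textbf{Part (iii).} Let $A$ be uniform and $f\in G(A)$; set $c=f(x_0)\ne0$. By Lemma \ref{Mult-Unif} and Remark \ref{Rem-1}(i) there is $g\in P_{x_0}(A)\cap{\rm exp}(A)$ with $k=fg/c\in P_{x_0}(A)$. Then $\varphi(g)=1$, so $\varphi(k)=\varphi(f)\varphi(c^{-1})$. Also $k\in G(A)$, and by (i) $|\varphi(k)|=1$. Since $\varphi(k)\in k(X)\subseteq{\rm int}(\mathbb D)\cup\{1\}$, this forces $\varphi(k)=1$.

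It remains to compute $\varphi(c^{-1})$. The constant $c^{-1}=e^{a}$ for a constant $a$, so $\varphi(c^{-1})=c^{-1}$. Hence $\varphi(f)=c=f(x_0)$.

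The main obstacle I anticipate is the approximation step in (i), transferring Lemma \ref{eq-inf} from $\overline A$ to functions actually in $A$.
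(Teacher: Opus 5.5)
Your proposal is correct and follows essentially the same route as the paper. Both arguments obtain $x_0$ from Maouche's theorem plus naturality, transfer the infimum estimate of Lemma \ref{eq-inf} from $\overline{A}$ to $A$ by approximation, handle $G(A)$ with the inverse trick, and use the multiplicative Bishop lemma with an exponential peaking function for (iii). The differences are minor: you approximate $u$ and renormalise so that $h\in{\rm exp}(A)$ has $h(x_0)=1$ exactly, where the paper approximates $h$ by $h_n\in{\rm exp}(A)$ and uses $\varphi(h_n)=h_n(x_0)\to 1$; and in (ii) you write the explicit decomposition $1=f+(1-f)$ with $n=2$, where the paper argues that ${\rm span}(\ker\varphi)$ is an ideal containing the maximal ideal $\ker\varphi_{x_0}$.
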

\begin{proof}
i) Let $x_0\in X$ be the associated point of $\varphi$, i.e.   $\varphi=\varphi_{x_0}$ on ${\rm exp}(A)$. We show that $|\varphi(f)|\le |f(x_0)|$ for all $f\in A$. Given $f\in A$ and $\epsilon>0$, by Lemma \ref{Mult-Unif},  there exists a function $h\in P_{x_0}(\overline{A})\cap {\rm exp}(\overline{A})$ such that $\|f h\|_X < |f(x_0)|+\epsilon$. Choose a sequence $\{h_n\}$ in ${\rm exp}(A)$ which uniformly converges to $h$. Then we have  $\|fh_n\|_X<|f(x_0)|+\epsilon$ for sufficiently large $n\in \Bbb N$.    
 Clearly  we have 
$\varphi(h_n)=h_n(x_0) \to 1$. Hence, using the spectral condition once again, we have 
\[|\varphi(f) h_n(x_0)|=|\varphi(fh_n)|\le \|fh_n\|_X< |f(x_0)|+\epsilon,\]
 for sufficiently large $n\in \Bbb N$. Tending $n \to \infty$, since $\epsilon>0$ is arbitrary  we get $|\varphi(f)|\le |f(x_0)|$, as desired. 

 Now assume that $f\in G(A)$. Without loss of generality we assume that $f(x_0)=1$. Since 
$|\varphi(f)|\le |f(x_0)|=1$, $|\varphi(f^{-1})|\le |f^{-1}(x_0)|=1$ and $\varphi(ff^{-1})=1$ we get 
$|\varphi(f)|=1=|f(x_0)|$.

The uniqueness property is immediate from the equality $|\varphi(e^f)|=e^{{\rm Re}(f(x_0))}$ for all $f\in A$, since $A$ separates the points of $X$. 

We note that, by  the above argument, $x_0$ is indeed, the same associated point to $\varphi$, that is $\varphi(e^f)=e^{f(x_0)}$ for all $f\in A$. 

ii) Clearly,  by part (i), we have   $\varphi(f)=f(x_0)$ for all $f\in A$ with $f(x_0)=0$, that is ${\rm ker}(\varphi_{x_0})\subseteq \ker(\varphi)$. Being $\varphi$ multiplicative, it follows that ${\rm span}(\ker(\varphi))$ is an ideal of $A$. Hence we have either $1\in {\rm span}(\ker(\varphi))$ or   ${\rm span}(\ker(\varphi)) =  \ker(\varphi_{x_0})$. In the latter case, we have clearly  $\ker(\varphi)=\ker(\varphi_{x_0})$, i.e. $\ker(\varphi)$ is a maximal ideal of $A$.

iii) Let $A$ be a uniform algebra and $f\in G(A)$. We again assume that $f(x_0)=1$.   Then by Lemma \ref{Mult-Unif} and Remark \ref{Rem-1},  there exists a function $h\in P_{x_0}(A)\cap {\rm exp}(A)$ such that  $fh\in P_{x_0}(A)$. Clearly $\varphi(h)=h(x_0)=1$.  Being $fh$ invertible in $A$, we have $|\varphi(fh)|=|fh(x_0)|=1$ and since $\varphi(fh) \in fh(X) \subseteq \Bbb {\rm int}(\Bbb D) \cup \{1\}$  we get   $\varphi(fh)=1$.  Thus $\varphi(f)=\varphi(fh) \varphi(h^{-1})=1=f(x_0)$. 
\end{proof}
Next corollary gives a generalization of part (iii) in the above proposition. 
 \begin{cor}\label{cor-zero}
   Under the assumptions of Proposition \ref{Main-P}(iii), we have $\varphi(f)\in\{0,f(x_0)\}$ for each $f\in A$ such that $0$ is not a limit point of $f(X)$. 
     \end{cor}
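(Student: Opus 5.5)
Setting: $A$ is a natural uniform algebra with $\ch(A)=X$, $\varphi$ is a multiplicative spectral function, and $x_0$ is the associated point. Fix $f\in A$ such that $0$ is not a limit point of $f(X)$. If $0\notin f(X)$, then $f\in G(A)$ because $A$ is natural, and Proposition \ref{Main-P}(iii) gives $\varphi(f)=f(x_0)$. So assume $0\in f(X)$ but $0$ is isolated in $f(X)$. Then $K=f^{-1}(0)$ and $X\setminus K$ are both closed and open in $X$. If $f(x_0)=0$, Proposition \ref{Main-P}(i) gives $|\varphi(f)|\le |f(x_0)|=0$, so $\varphi(f)=0=f(x_0)$. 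Hence the remaining case is $f(x_0)\neq 0$, i.e. $x_0\in X\setminus K$.

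In this case I will use the Shilov idempotent theorem. Since $A$ is natural, its maximal ideal space is $X$, so the characteristic function $\chi=\chi_{X\setminus K}$ belongs to $A$. Put $g=f+(1-\chi)$. Then $g=f$ on $X\setminus K$ and $g=1$ on $K$, so $g$ never vanishes and $g\in G(A)$. Also $f=g\chi$, because both sides equal $f$ on $X\setminus K$ and $0$ on $K$. Multiplicativity gives $\varphi(f)=\varphi(g)\varphi(\chi)$. By Proposition \ref{Main-P}(iii), $\varphi(g)=g(x_0)=f(x_0)$. Since $\chi^2=\chi$, we get $\varphi(\chi)^2=\varphi(\chi)$, so $\varphi(\chi)\in\{0,1\}$; this also follows from the spectral condition $\chi(X)\subseteq\{0,1\}$. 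Therefore $\varphi(f)\in\{0,f(x_0)\}$.

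The main obstacle is that the argument depends on the Shilov idempotent theorem to produce $\chi\in A$. If that result is not available, I would instead build an exponential-type function from a peaking function at $x_0$, using Lemma \ref{Mult-Unif}, but the idempotent route is cleaner. Naturality is exactly the hypothesis needed to identify the maximal ideal space with $X$ and apply Shilov's theorem.
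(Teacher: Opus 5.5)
Your proof is correct, but it takes a genuinely different route from the paper. You factor $f=g\chi$, where $\chi=\chi_{X\setminus f^{-1}(0)}$ is an idempotent of $A$ and $g=f+1-\chi\in G(A)$. Proposition \ref{Main-P}(iii) then gives $\varphi(g)=f(x_0)$, and the spectral condition gives $\varphi(\chi)\in\{0,1\}$. The one nontrivial input is $\chi\in A$, and it is legitimate. Naturality makes $x\mapsto\varphi_x$ a continuous bijection of the compact space $X$ onto the Hausdorff maximal ideal space, hence a homeomorphism. Moreover $f^{-1}(0)$ is clopen because $0$ is isolated in $f(X)$.

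The paper never leaves the peaking-function toolkit. It first treats $h\in P_{x_0}(A)$ with $0$ not a limit point of $h(X)$. It writes $\varphi(h)=\varphi(he^{n(1-h)})=h(x_n)e^{n(1-h(x_n))}$ and analyses the cases $c=0$, $c\notin\{0,1\}$, $c=1$ for a limit $c$ of $h(x_n)$. It then reduces a general $f$ with $f(x_0)=1$ to the peaking function $fk$, where $k\in P_{x_0}(A)\cap{\rm exp}(A)$ comes from Lemma \ref{Mult-Unif} and Remark \ref{Rem-1}, after checking that $0$ is still not a limit point of $fk(X)$.

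Your route is shorter and more informative. It gives the exact formula $\varphi(f)=f(x_0)\,\varphi(\chi_{X\setminus f^{-1}(0)})$, so which alternative occurs is decided by $\varphi$ on a single idempotent. It also uses only naturality, Proposition \ref{Main-P}(i) and the identity $\varphi=\varphi_{x_0}$ on $G(A)$. So it works in any natural Banach function algebra with $\ch(A)=X$ where that identity is available, for example under the hypotheses of Theorem \ref{Main2}. The paper's route avoids idempotent theorems altogether, and its first step is a standalone fact about peaking functions of the kind discussed in Remark \ref{rem-2}.

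Finally, Shilov's theorem, which rests on the several-variable functional calculus, is more than you need here. Since $0$ is an isolated point of $\sigma(f)=f(X)$, take $F$ holomorphic near $f(X)$, equal to $0$ near $0$ and to $1$ near $f(X)\setminus\{0\}$. The one-variable Riesz idempotent $F(f)$ already equals $\chi_{X\setminus f^{-1}(0)}$, because point evaluations commute with the holomorphic functional calculus.
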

    \begin{proof}
    We first show  that $\varphi(h)\in \{0,h(x_0)\}$ if $h\in P_{x_0}(A)$ and  $0$ is not a limit point of $h(X)$. To do this, we note that for each $n\in \Bbb N$ we have $\varphi(e^{n(1-h)})=1$ and consequently, by hypotheses, we have 
    $$\varphi(h)=\varphi\big(h\, e^{n(1-h)}\big)=h(x_n) e^{n(1-h(x_n))} \qquad (n\in \Bbb N),$$ 
    for some sequence $\{x_n\}$ in $X$. 
 Passing through a subsequence, we assume that $h(x_n) \to c$ for some $c\in h(X) \subseteq {\rm int}(\Bbb D) \cup \{1\}$. If $c=0$, then since $0$ is not a limit point of $h(X)$, we have  $h(x_n)=0$ for all sufficiently large $n\in \Bbb N$. In particular, $\varphi(h)=0$ in this case. So assume that $c\neq 0$. Clearly, in this case the sequence $\{ e^{n(1-h(x_n))}\}$  is convergent.  
 If $c\neq 1$ (equivalently ${\rm Re}(c)\neq 1$), then there exists $r>0$ such that $1-{\rm Re}(h(x_n))>r$ and $|h(x_n)|>r$ for all sufficiently large $n\in \Bbb N$. Thus  
 $$|\varphi(h)| = |h(x_n)| e^{n\big(1-{\rm Re}(h(x_n))\big)}\ge r e^{nr}$$
  for all sufficiently large $n\in \Bbb N$, which is impossible. Hence $c=1$ and consequently 
  $|\varphi(h)|=  \lim_{n \to \infty} e^{n \big(1-{\rm Re}(h(x_n))\big)}\ge 1$. Therefore $|\varphi(h)|=1$ which concludes that $\varphi(h)=1$, since $h$ is a peaking function. 
  
  Now assume that $f\in A$  such that $0$ is not a limit point of $f(X)$. Since $|\varphi(f)|\le |f(x_0)|$, by Proposition \ref{Main-P}, we have $\varphi(f)=0$ if $f(x_0)=0$. So we may assume that $f(x_0)=1$. Then, by Lemma \ref{Mult-Unif}, there exists a function $k\in P_{x_0}(A)\cap {\rm exp}(A)$ such that $fk\in P_{x_0}(A)$. Obviously we have $\varphi(f)=\varphi (fk)$. Set $h=fk$. We claim that $0$ is not a limit point of $h(X)$,  as well. Indeed, assume that  there exists a sequence $\{h(x_n)\}$ of distinct points in $h(X)$ converging to $0$. Passing through a subsequence we may assume that $\{f(x_n)\} $ converges to a point $c\in f(X)$. If $c\neq 0$, then    $k(x_n)\to 0$, that is $0\in k(X)$ which is impossible, since $k\in {\rm exp}(A)$. Hence $f(x_n)\to 0$ and since $0$ is not a limit point of $f(X)$ we get $f(x_n)=0$ for sufficiently large $n\in \Bbb N$. Thus $h(x_n)=0$ for sufficiently large $n\in \Bbb N$. This shows that $0$ is not a limit point of $h(X)$. Therefore, by the above argument, we have $\varphi(f)=\varphi(fk)\in \{0, fk(x_0)\}=\{0, 1\}$, as desired. 
 \end{proof}

\begin{lem}\label{lem-real}
Let $\varphi \colon  A\to \Bbb C$ be a multiplicative spectral function on the natural Banach function algebra $A$ and $x_0\in X$ be its associated point. Then for each real-valued $f\in A$  we have $\varphi(f)\in \{0,f(x_0)\}$. 
\end{lem}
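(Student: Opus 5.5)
The plan is to exploit the multiplicativity of $\varphi$ against the exponentials $e^{itf}$, $t\in\Bbb R$, on which $\varphi$ is already known by Theorem \ref{Ma-th}. Since $f$ is real-valued and $\sigma(f)=f(X)$, the value $c:=\varphi(f)$ is a real number lying in $f(X)$. Put $a:=f(x_0)$. We assume $c\neq 0$ and aim to show $c=a$.

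For every $t\in\Bbb R$ we have $itf\in A$, so $\varphi(e^{itf})=e^{ita}$ because $x_0$ is the associated point. Multiplicativity and the spectral condition applied to $fe^{itf}$ then give
\[
c\,e^{ita}=\varphi(f e^{itf})\in \{f(y)e^{itf(y)} : y\in X\}.
\]
So for each $t$ there is a point $y_t\in X$ with $c\,e^{ita}=f(y_t)e^{itf(y_t)}$. Taking absolute values and using that $f$ is real gives $|f(y_t)|=|c|$, hence $f(y_t)\in\{c,-c\}$. Substituting back, for each $t\in\Bbb R$ at least one of the following holds:
\begin{enumerate}
\item $f(y_t)=c$, and then $e^{it(c-a)}=1$, i.e.\ $t(c-a)\in 2\pi\Bbb Z$;
\item $f(y_t)=-c$, and then $e^{it(-c-a)}=-1$, i.e.\ $t(c+a)\in \pi(2\Bbb Z+1)$.
\end{enumerate}

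The key step is a counting argument. If $c\neq a$, the set of $t$ satisfying the first alternative is countable. The set satisfying the second alternative is also countable: when $c+a\neq 0$ it is a discrete lattice, and when $c+a=0$ it is empty, since $e^{0}=1\neq -1$. Two countable sets cannot cover $\Bbb R$, which is a contradiction. Hence $c=a$, and so $\varphi(f)\in\{0,f(x_0)\}$.

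I do not expect a serious obstacle here. The only points needing care are to confirm that $c$ is real, which follows from $c\in f(X)$, and to treat the degenerate case $c+a=0$ separately. This argument uses no boundary hypothesis such as $\ch(A)=X$; it relies only on naturality, through $\sigma(g)=g(X)$, and on Theorem \ref{Ma-th}.
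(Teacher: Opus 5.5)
Your argument is correct, and it takes a different route from the paper's.

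The paper follows Brits. Assuming $\varphi(f)\neq 0$, it evaluates $\varphi$ on $1+if$ and on $f(1+if)$, where the spectral values $\alpha,\beta\in f(X)$ are real. Comparing real and imaginary parts gives $\varphi(1+if)=1+i\varphi(f)$. It then repeats this with $e^f(1+if)$ to obtain $\varphi(e^f)=e^{\varphi(f)}$. Only at the very end does it bring in the associated point, via $\varphi(e^f)=e^{f(x_0)}$ and injectivity of the real exponential.

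You instead use the associated point from the start and test $\varphi$ against the unimodular exponentials $e^{itf}$. Comparing moduli forces $f(y_t)=\pm c$, and comparing phases then confines $t$ to two discrete sets. This is shorter and makes the mechanism transparent. You do not even need the countability argument: if $c\neq a$, any single $t\neq 0$ with $|t(c-a)|<2\pi$ and $|t(c+a)|<\pi$ already violates both alternatives, so one test element $fe^{itf}$ suffices.

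The paper's route has a compensating advantage. Along the way it produces the identities $\varphi(1+if)=1+i\varphi(f)$ and $\varphi(e^f)=e^{\varphi(f)}$ for real $f$ with $\varphi(f)\neq 0$. These use only multiplicativity and $\sigma(g)=g(X)$, so they hold without invoking Theorem \ref{Ma-th}.

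For the lemma as stated, both proofs rely on the same ingredients: naturality and Theorem \ref{Ma-th}. Neither needs $\ch(A)=X$.
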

\begin{proof}
Similar to  \cite{Brits-2} we can show that  for each real-valued $f\in A$ with $\varphi(f)\neq 0$ we have $\varphi(1+if)=1+i \varphi(f)$ and 
$\varphi(e^f)=e^{\varphi(f)}$. To do this, first we can choose by hypotheses  $\alpha, \beta \in f(X)$ such that 
\[\varphi(f)(1+i\alpha)= \varphi(f) \varphi(1+if)= \varphi(f (1+if))= \beta(1+i\beta).\]
Hence  $\varphi(f)=\beta$ and $\varphi(f) \alpha=\beta^2$, and consequently  $\varphi(f)=\alpha$. Thus $\varphi(1+if)=(1+i \varphi(f))$. On the other hand, we can also choose a scalar $\gamma\in f(X)$ such that 
\[ e^\gamma (1+i\gamma)= \varphi(e^f( 1+if))= \varphi(e^f) (1+i \varphi(f))\]
which concludes that $\varphi(e^f)=e^\gamma$ and $\varphi(f)=\gamma$, that is $\varphi(e^f)=e^{\varphi(f)}$. 
 Since $\varphi(e^f)=e^{f(x_0)}$ and $f$ is real-valued we have $\varphi(f)=f(x_0)$. 
\end{proof}
\begin{remark}\label{rem-2}{\rm  If $A$ is a natural uniform algebra on $X$, then we can use  Lemma \ref{Mult-Unif} to see  that a  multiplicative spectral function $\varphi \colon  A \to \Bbb C$ is linear (equivalently $\varphi=\varphi_{x_0}$ on  $A$, where $x_0\in X$ is the associated point of $\varphi$) if and only if $|\varphi(h)|=1$  for all $h\in P_{x_0}(A)$. The authors do not aware if (under the continuity assumption on  $\varphi$) this equality holds for all peaking functions or not. However,  for some  family of  peaking functions $h\in P_{x_0}(A)$, we may have  $\varphi(h)\in \{0,1\}$  (see  Corollary \ref{cor-zero} and Lemma \ref{lem-real}).  For another  example,   it is easy to see that if $h\in P_{x_0}(A)$ such that $1$ is not a limit point of $h(X)$, then $\varphi(h)\in \{0, 1\}$. 
Fo this, let  $n\in \Bbb N$. Then since  $1+\frac{n-1}{n}h\in {\rm exp}(A)$ we can  choose a point $x_n\in X$ such that 
  \[ \varphi(h) \Big( 1+ \frac{n-1}{n} \Big)^n=\varphi\Big (h \big(1+\frac{n-1}{n}h\big)^n\Big)=h(x_n) \Big(1+\frac{n-1}{n} h(x_n)\Big)^n.\]
  Hence  for each $n\in \Bbb N$ we have 
  \[\varphi(h)=h(x_n) \Big(\frac{n+(n-1)h(x_n)}{2n-1}\Big)^n.\]
   Passing through a subsequence, there exists a point $c\in h(X)$ such that $h(x_n)\to c$. If $c=1$, then since $1$ is not a limit point of $h(X)$ we get $h(x_n)=1$ for all sufficiently large $n\in \Bbb N$. In particular $\varphi(h)=1$. If $c\neq 1$, then there exists $r\in (0,1)$ such that $|h(x_n)|\le r$ for sufficiently large $n\in \Bbb N$. 
  Choosing $s\in (\frac{r+1}{2}, 1)$ it is easy to see that $\frac{n+(n-1)r}{2n-1}<s$ whenever $n\in \Bbb N$ is sufficiently large.   
  This implies that $   \big(\frac{n+(n-1)h(x_n)}{2n-1}\big)^n \to 0$ and consequently $\varphi(h)=0$. Thus $\varphi(h)\in \{0, 1\}$. }
\end{remark}

Motivated by  Lemmas \ref{Mult-Unif}, \ref{Mult-Lip} and Remark \ref{Rem-1},  we  give the following  definition. 
\begin{definition}
We say that a Banach function algebra $B$ on a compact Hausdorff space $X$ has  Bishop property, if for each point  $x_0\in \ch(B)$  and $f\in B$ with $f(x_0)\neq 0$ there exists a peaking function $g\in P_{x_0}(B)$ such that $g\in  {\rm exp}(B) \cup B_+$  and  $\frac{1}{f(x_0)}fg \in P_{x_0}(B)$.
\end{definition} 
Hence  uniform algebras on a compact Hausdorff space, the Banach function algebras ${\rm Lip}( X)$ for a compact metric space $X$ and ${\rm AC}([0,1])$ have the Bishop property. 

As we stated in Proposition \ref{Main-P} if $\ch(A)=X$, then  for a multiplicative spectral function $\varphi:A \to \Bbb C$ we have either ${\rm ker}(\varphi)$ is a maximal ideal of $A$ (that is ${\rm ker}(\varphi)={\rm ker}(\varphi_{x_0})$, where $x_0$ is the associated point of $\varphi$) or $1=f_1+\cdots +f_n$ for some $f_1,..., f_n\in {\rm ker}(\varphi)$. 

\begin{lem}\label{Main-lem}
Let $\ch(A)=X$ and $A$ have  the Bishop property. Then for any multiplicaive spectral function $\varphi \colon  A \to \Bbb C$ (with the associated point $x_0$)  in  either of cases that  

i)  $\varphi$ is continuous, 

or 

ii) $1 \notin {\rm span}({\rm ker}(\varphi))$,

\noindent 
we have  $\varphi(f)=f(x_0)$ for all $f\in A$ such that  $\overline{f}\in A$. 
 \end{lem}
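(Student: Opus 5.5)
The plan is to reduce everything to real-valued functions, for which Lemma \ref{lem-real} already gives $\varphi(h)\in\{0,h(x_0)\}$. First I show that under (i) or (ii) in fact $\varphi(h)=h(x_0)$ for every real-valued $h\in A$. Then I pass to a general $f$ with $\overline{f}\in A$ using $|f|^2=f\overline{f}$, Proposition \ref{Main-P}(i) and the Bishop property.

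\emph{Real-valued functions.} In case (ii), the proof of Proposition \ref{Main-P}(ii) shows that ${\rm span}(\ker(\varphi))$ is an ideal containing $\ker(\varphi_{x_0})$. Since it does not contain $1$, we get $\ker(\varphi)=\ker(\varphi_{x_0})$. Hence for real $h$ with $h(x_0)\neq 0$ we have $\varphi(h)\neq 0$, so $\varphi(h)=h(x_0)$ by Lemma \ref{lem-real}. In case (i), fix a real $h$ with $h(x_0)>0$; the case $h(x_0)<0$ is symmetric, using $h-t$, and $h(x_0)=0$ is covered by Proposition \ref{Main-P}(i). Consider the continuous map $t\mapsto \varphi(h+t)$ on $[0,T]$. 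By Lemma \ref{lem-real} it takes values in $\{0,h(x_0)+t\}$. For $T>\|h\|_X$, $h+T$ is invertible, so $|\varphi(h+T)|=h(x_0)+T\neq 0$ by Proposition \ref{Main-P}(i). The sets $\{t:\varphi(h+t)=0\}$ and $\{t:\varphi(h+t)=h(x_0)+t\}$ are closed and cover $[0,T]$. They are disjoint, because $h(x_0)+t>0$ on this interval. By connectedness the second set is all of $[0,T]$, and $t=0$ gives $\varphi(h)=h(x_0)$.

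\emph{Modulus of $\varphi(f)$.} Let $f\in A$ with $\overline{f}\in A$. Then $|f|^2=f\overline{f}\in A$ is real, so by the previous step
\[\varphi(f)\varphi(\overline{f})=\varphi(|f|^2)=|f(x_0)|^2.\]
Proposition \ref{Main-P}(i) gives $|\varphi(f)|\le|f(x_0)|$ and $|\varphi(\overline{f})|\le|f(x_0)|$, hence $|\varphi(f)|=|f(x_0)|$. If $f(x_0)=0$ we are done, so assume, after scaling, $f(x_0)=1$.

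\emph{Fixing the argument.} By the Bishop property there is $g\in P_{x_0}(A)$ with $g\in{\rm exp}(A)\cup A_+$ and $fg\in P_{x_0}(A)$. If $g\in{\rm exp}(A)$, then $\varphi(g)=g(x_0)=1$ because $x_0$ is the associated point. If $g\in A_+$, then $g$ is real, so $\varphi(g)=g(x_0)=1$ by the first step. Therefore
\[|\varphi(fg)|=|\varphi(f)|\,|\varphi(g)|=1.\]
Since $\varphi(fg)\in fg(X)\subseteq{\rm int}(\Bbb D)\cup\{1\}$, this forces $\varphi(fg)=1$, and so $\varphi(f)=1=f(x_0)$.

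The main obstacle is the first step, upgrading $\varphi(h)\in\{0,h(x_0)\}$ to $\varphi(h)=h(x_0)$ for real $h$. This is exactly where hypothesis (i) or (ii) is needed: in case (i) through the connectedness argument, in case (ii) through the identification of the kernel. Once that is in place, the remaining steps are short consequences of Proposition \ref{Main-P}(i) and the Bishop property.
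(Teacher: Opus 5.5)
Your proof is correct, and its overall architecture matches the paper's. You first pin down $\varphi$ on nonnegative functions (you do it for all real-valued ones), then get $|\varphi(f)|=|f(x_0)|$ from $f\overline{f}$ and Proposition~\ref{Main-P}(i), then use the Bishop property to force $\varphi(fg)=1$. Case (ii) of the first step is handled exactly as in the paper.

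The genuine difference is case (i). The paper takes $g\in A_+$ and notes that $g+\frac{1}{n}$ has spectrum in $(0,\infty)$, hence a logarithm in $A$ by holomorphic functional calculus. So $\varphi(g+\frac{1}{n})=g(x_0)+\frac{1}{n}$ follows directly from $\varphi=\varphi_{x_0}$ on ${\rm exp}(A)$, and continuity lets $n\to\infty$. You instead apply Lemma~\ref{lem-real} to $h+t$ and run a connectedness argument on $t\mapsto\varphi(h+t)$.

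The paper's route is shorter and does not need Lemma~\ref{lem-real} at all in the continuous case. It only covers $A_+$, but that is all the lemma requires, since $f\overline{f}\in A_+$ and the Bishop function lies in ${\rm exp}(A)\cup A_+$.

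Your route treats both hypotheses uniformly: in each case the task is just to exclude the spurious value $0$ permitted by Lemma~\ref{lem-real}. It also gives the slightly stronger intermediate fact that $\varphi(h)=h(x_0)$ for every real-valued $h\in A$, which is not needed here.

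A small simplification: $\varphi(h+T)\neq 0$ already follows from the spectral condition, since $(h+T)(X)\subset(0,\infty)$. So Proposition~\ref{Main-P}(i) is not needed at that endpoint.
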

  \begin{proof}
 We first note that  $\varphi(g)=g(x_0)$ for all $g\in A_+$. Indeed,   for each  $g\in A_+$  and $n\in \Bbb N$ the spectrum of the  function $g_n=g+\frac{1}{n}$  in $A$ is contained in the set of all positive real numbers, and consequently it has a logarithm in $A$. Thus  $\varphi(g_n)=g_n(x_0)=g(x_0)+\frac{1}{n}$. Hence, in the case that  $\varphi$ is continuous we have  $\varphi(g)=\lim_{n \to \infty} \varphi(g_n)=g(x_0)$.
In the second case  $1 \notin {\rm span}({\rm ker}(\varphi))$ and consequently  we have ${\rm ker}(\varphi)={\rm ker}(\varphi_{x_0})$. Thus  Lemma  \ref{lem-real} implies that $\varphi(g)=g(x_0)$. Hence, in both cases  we have $\varphi(g)=g(x_0)$ for all $g\in A_+$.  

 

Now let $f\in A$  such that  $\overline{f} \in A$.  We may assume that $f(x_0)=1$.   By the above argument we have $\varphi(f\overline{f})=1$ and consequently  \[1= |\varphi(f) \varphi(\overline{f})|=|\varphi(f)|\, |\varphi(\overline{f})|\le |f(x_0)|^2=1.\]
Therefore, $|\varphi(f)|=1$. Now choose, by the Bishop property,  a function $h\in  P_{x_0}(A)$ such that $h\in {\rm exp}(A)\cup A_+$ and  $fh\in P_{x_0}(A)$.  By the above argument, we have again $\varphi(h)=1$, hence 
 $|\varphi(f h)|=|\varphi(f) \varphi(h)|=1$ and since $fh(X) \subseteq {\rm int}(\Bbb D)\cup \{1\}$ we get $\varphi(fh)=1$. Thus $1=\varphi(fh)=\varphi(f) \varphi(h)=\varphi(f)$, as desired.  
\end{proof}
Now  we have the following theorem which provides  our first result concerning linearity of multiplicative spectral functions on some natural Banach function algebras having Bishop property (including $C(X)$, Lipschitz algebras of functions and $AC([0,1])$). In the lack of Bishop property,  the second theorem (Theorem \ref{Main2}) states similar results  for natural Banach function algebras on compact metric spaces satisfying certain separation property (including $C^1([0,1])$).  

\begin{theorem}\label{Main1}
Let  $A$ be a natural Banach function algebra on a compact Hausdorff space $X$ with $\ch(A)=X$  and  let  $\varphi:A \to \Bbb C$ be a (not necessarily continuous) multiplicative spectral function.  Then 

(i) Either $\ker(\varphi)$ is a maximal ideal of $A$ or $1=f_1+ \cdots +f_n$ for some $f_1, ..., f_n \in \ker(\varphi)$.  

(ii) Assume, furthermore,  that $A$ has the Bishop property. Then in either of cases that   $1 \notin {\rm span}({\rm ker}(\varphi))$ or $\varphi$ is continuous  
we have  $\varphi(f)=f(x_0)$ for all $f\in A$ such that  $\overline{f}\in A$. 
\end{theorem}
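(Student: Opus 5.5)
Theorem \ref{Main1} essentially collects the results already proved in this section, so the plan is to assemble them rather than build anything new. Throughout, $x_0$ is the associated point of $\varphi$ given by Theorem \ref{Ma-th}. By Proposition \ref{Main-P}(i) it is also the unique point with $|\varphi(f)|\le |f(x_0)|$ for all $f\in A$.

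For part (i), I would invoke Proposition \ref{Main-P}(ii) directly, since its hypotheses are exactly those of the theorem ($A$ natural and $\ch(A)=X$). It is worth recalling why it holds. Part (i) of that proposition gives $\ker(\varphi_{x_0})\subseteq\ker(\varphi)$. Multiplicativity makes ${\rm span}(\ker(\varphi))$ an ideal of $A$, and this ideal contains the maximal ideal $\ker(\varphi_{x_0})$. Hence it equals either $A$ or $\ker(\varphi_{x_0})$. In the second case $\ker(\varphi)=\ker(\varphi_{x_0})$, because $\varphi(1)=1$ excludes $\ker(\varphi)=A$, and so $\ker(\varphi)$ is maximal. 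In the first case $1$ is a finite sum of elements of $\ker(\varphi)$.

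For part (ii), I would apply Lemma \ref{Main-lem}, whose hypotheses ($\ch(A)=X$, the Bishop property, and either continuity of $\varphi$ or $1\notin{\rm span}(\ker(\varphi))$) match the statement. The chain of steps in that lemma runs as follows.

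\textbf{Step 1.} Show $\varphi(g)=g(x_0)$ for $g\in A_+$. Each $g+\frac1n$ has positive spectrum, so it admits a logarithm in $A$ by the holomorphic functional calculus. Hence $\varphi(g+\frac1n)=g(x_0)+\frac1n$. Continuity of $\varphi$ then gives $\varphi(g)=g(x_0)$ in the limit. In the other case, part (i) gives $\ker(\varphi)=\ker(\varphi_{x_0})$; combined with Lemma \ref{lem-real}, which gives $\varphi(g)\in\{0,g(x_0)\}$, this yields the same conclusion.

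\textbf{Step 2.} For $f$ with $\overline f\in A$, normalize so that $f(x_0)=1$. The case $f(x_0)=0$ is immediate from Proposition \ref{Main-P}(i). By Step 1, $\varphi(f\overline f)=1$, and the bound $|\varphi|\le|\cdot(x_0)|$ then forces $|\varphi(f)|=1$.

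\textbf{Step 3.} By the Bishop property, choose $h\in P_{x_0}(A)\cap({\rm exp}(A)\cup A_+)$ with $fh\in P_{x_0}(A)$. Then $\varphi(h)=1$, either from the exponential case or from Step 1. So $|\varphi(fh)|=1$. Since $\varphi(fh)\in fh(X)\subseteq{\rm int}(\Bbb D)\cup\{1\}$, it follows that $\varphi(fh)=1$, and therefore $\varphi(f)=1=f(x_0)$.

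The only delicate point is Step 1 in the discontinuous case. There one must exclude $\varphi(g)=0$ when $g(x_0)\neq0$, and this is exactly where the maximality of the kernel from part (i) is used. Since both parts reduce to results stated earlier, I expect no real obstacle beyond checking that those hypotheses are met.
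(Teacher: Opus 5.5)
Your proposal is correct and follows the paper exactly: the theorem is just Proposition~\ref{Main-P}(ii) combined with Lemma~\ref{Main-lem}, and your Steps 1--3 reproduce the proof of that lemma. That proof takes logarithms of $g+\frac1n$ and then uses either continuity or maximality of the kernel together with Lemma~\ref{lem-real}, next obtains $|\varphi(f)|=1$ from $f\overline{f}$, and finishes with the peaking function supplied by the Bishop property. One detail is left unstated, both by you and by the paper: the normalization $f(x_0)=1$ is legitimate because $\varphi(c\cdot 1)=c$ for constants (since $\sigma(c\cdot 1)=\{c\}$), so $\varphi(cf)=c\,\varphi(f)$.
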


The example given in \cite{Ma} shows that there exists a nonzero multiplicative spectral function $\varphi \colon  C([0,1])\to \Bbb C$ such that ${\rm span}(\ker(\varphi))=C([0,1])$, that is $\ker(\varphi)$ is not a (maximal)  ideal. 

The following  corollary is immediate from the above theorem. As it was mentioned before, the case that $\varphi$ is continuous can also be deduced from  the results of \cite{Brits-CX} and \cite{Brits-Her}.  
\begin{cor}
Let $A$ be either $C(X)$ for some compact Hausdorff space $X$ or one the Banach function algebras ${\rm Lip}(X)$ where $X$ is a compact metric space or ${\rm AC}([0,1])$. Let $\varphi:A\to \Bbb C$ be a multiplicative spectral function on $A$. Then the following are equivalent: 

(i) $\varphi$ is a character. 

(ii) $\varphi$ is continuous. 

(iii) $1 \notin {\rm span}({\rm ker}(\varphi))$. 
\end{cor}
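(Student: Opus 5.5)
The statement is the corollary. I will prove (i)$\Rightarrow$(ii), (i)$\Rightarrow$(iii), and then (ii)$\Rightarrow$(i) and (iii)$\Rightarrow$(i). The first two are immediate. If $\varphi$ is a character, it is a linear functional on a Banach algebra, so it is automatically continuous. Its kernel is a maximal ideal, a proper linear subspace, so it equals its own span and does not contain $1$.

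For the converse directions I will verify the hypotheses of Theorem \ref{Main1} for each algebra, and then check that every $f\in A$ has $\overline{f}\in A$. All three algebras are natural Banach function algebras with $\ch(A)=X$, as recorded in the Preliminaries; the Choquet boundary of $C(X)$ is $X$. The Bishop property holds as follows.
\begin{itemize}
\item For $C(X)$, a uniform algebra, it follows from Lemma \ref{Mult-Unif} together with Remark \ref{Rem-1}(i), which gives $g\in{\rm exp}(A)$.
\item For ${\rm Lip}(X)$ and ${\rm AC}([0,1])$, it follows from Lemma \ref{Mult-Lip}, which gives a nonnegative peaking function $g\in A_+$.
\end{itemize}
This is exactly the remark made after the definition of the Bishop property.

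Each of these algebras is self-adjoint. The conjugate of a continuous function is continuous. Conjugation preserves the Lipschitz constant, since $|\overline{f}(x)-\overline{f}(y)|=|f(x)-f(y)|$. It also preserves absolute continuity and total variation. So the family in Theorem \ref{Main1}(ii) is all of $A$.

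Now assume (ii) or (iii). Theorem \ref{Main1}(ii) gives $\varphi(f)=f(x_0)$ for every $f\in A$, where $x_0$ is the associated point. Thus $\varphi=\varphi_{x_0}$ is linear and multiplicative, and it is a character because $\varphi(1)=1$.

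I do not expect a genuine obstacle, since the corollary is a bookkeeping consequence of Theorem \ref{Main1}. The only points that need care are confirming self-adjointness for ${\rm AC}([0,1])$, and confirming that the Bishop property's requirement $g\in{\rm exp}(B)\cup B_+$ is met in each case. Both are settled by the lemmas already cited.
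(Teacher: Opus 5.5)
Your proposal is correct and is the same argument the paper has in mind when it calls the corollary immediate from Theorem \ref{Main1}. You check naturality, $\ch(A)=X$, the Bishop property (Lemma \ref{Mult-Unif} with Remark \ref{Rem-1}(i) for $C(X)$, Lemma \ref{Mult-Lip} for ${\rm Lip}(X)$ and ${\rm AC}([0,1])$) and closure under conjugation, so Theorem \ref{Main1}(ii) forces $\varphi=\varphi_{x_0}$, while the implications out of (i) follow from the automatic continuity of characters and the fact that a character's kernel is a proper subspace.
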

We should note that for a unital Banach algebra $A$, the proof of Theorem \ref{Ma-th} can be applied to show that if $G$ is a multiplicative subgroup of $G(A)$ containing ${\rm exp}(A)$, then for any multiplicative spectral function $\varphi \colon  G \to \Bbb C$ there exists a character $\psi$ on $A$ such that $\varphi(x)=\psi(x)$ for all $x\in G_1(A)$. Then the proof of Proposition \ref{Main-P} (iii) yields the following corollary. 
\begin{cor}
Let  $A$ be a natural uniform algebra on a compact Hausdorff space $X$ with $\ch(A)=X$, $G$ be a subgroup of $G(A)$ containing ${\rm exp}(A)$ and let $\varphi:G \to \Bbb C$ be a multiplicative function  satisfying  $\varphi(f) \in \sigma(f)$ for all $f \in G$. Then $\varphi$ is extended to a character on $A$. 
\end{cor}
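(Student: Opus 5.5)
The plan is to show that $\varphi(f)=f(x_0)$ for every $f\in G$, where $x_0$ is a suitable point of $X$. Then $\varphi$ is simply the restriction to $G$ of the evaluation character $\varphi_{x_0}$, which is the required extension. The argument reruns the proof of Proposition \ref{Main-P}(i) and (iii). Each time we use it, we must check that every function to which $\varphi$ is applied still lies in $G$. This holds because $G$ is a group containing ${\rm exp}(A)$.

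\emph{Step 1.} By the remark preceding the statement (the proof of Theorem \ref{Ma-th} applied to $\varphi$ on $G$), there is a character $\psi$ on $A$ with $\varphi=\psi$ on $G_1(A)\supseteq {\rm exp}(A)$. Since $A$ is natural, $\psi=\varphi_{x_0}$ for some $x_0\in X$. Hence $\varphi(e^g)=e^{g(x_0)}$ for all $g\in A$.

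\emph{Step 2.} We show $|\varphi(f)|=|f(x_0)|$ for every $f\in G$. Since $A$ is a uniform algebra, $\overline{A}=A$. Also $x_0\in \ch(A)=X$, so Lemma \ref{eq-inf} applies. Given $\epsilon>0$, it yields $h\in P_{x_0}(A)\cap{\rm exp}(A)$ with $\|fh\|_X<|f(x_0)|+\epsilon$. Because $G$ is a group containing ${\rm exp}(A)$, the product $fh$ lies in $G$. Multiplicativity, the spectral condition $\varphi(fh)\in fh(X)$, and $\varphi(h)=h(x_0)=1$ then give
\[|\varphi(f)|=|\varphi(fh)|\le\|fh\|_X<|f(x_0)|+\epsilon .\]
Since $\epsilon$ is arbitrary, $|\varphi(f)|\le |f(x_0)|$. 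Applying the same bound to $f^{-1}\in G$, and using $\varphi(f)\varphi(f^{-1})=\varphi(1)=1$, we obtain $|\varphi(f)|=|f(x_0)|$. Note that $f(x_0)\neq 0$ here, since $f$ is invertible.

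\emph{Step 3.} We normalize and use the multiplicative Bishop lemma. Replacing $f$ by $f/f(x_0)$, we may assume $f(x_0)=1$. This replacement is harmless: the constant $f(x_0)$ equals $e^{c}$ for some $c$, so it lies in ${\rm exp}(A)\subseteq G$, and $\varphi$ of it equals $f(x_0)$ by Step 1. By Lemma \ref{Mult-Unif} together with Remark \ref{Rem-1}(i), there is $h\in P_{x_0}(A)\cap {\rm exp}(A)$ such that $fh\in P_{x_0}(A)$. Again $fh\in G$, so by Step 2 we have $|\varphi(fh)|=1$. Since $\varphi(fh)\in fh(X)\subseteq {\rm int}(\Bbb D)\cup\{1\}$, this forces $\varphi(fh)=1$. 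Finally $\varphi(h)=1$ by Step 1, and therefore $\varphi(f)=\varphi(fh)\,\varphi(h^{-1})=1=f(x_0)$.

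No step presents a real obstacle. The only delicate point is the bookkeeping showing that every auxiliary function stays inside $G$. The essential ingredient is that, in the uniform algebra setting, the peaking functions supplied by Lemma \ref{eq-inf} and Lemma \ref{Mult-Unif} can be taken directly in ${\rm exp}(A)$. This is what lets us avoid the approximation by $h_n$ used in Proposition \ref{Main-P}(i).
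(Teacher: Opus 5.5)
Your proposal is correct and follows the paper's route: obtain $x_0$ from the group version of Theorem \ref{Ma-th} together with naturality, then rerun the proofs of Proposition \ref{Main-P}(i) and (iii) on $G$. The only added work is checking that every auxiliary element ($fh$, $f^{-1}$, $h^{-1}$ and the normalizing constant) lies in $G$, which holds because $G$ is a group containing ${\rm exp}(A)$; you also rightly use Lemma \ref{eq-inf} in Step 2, where no approximation $h_n\to h$ is needed since $\overline{A}=A$.
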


In the lack of the Bishop property, next theorem gives a similar result to Theorem \ref{Main1}(ii), under an alternative condition. 
\begin{theorem}\label{Main2}
   Let $A$ be a natural Banach function algebra on a compact metric space $X$ such that for any pair of disjoint closed subsets $F$ and $K$ of $X$ there exists a function $h\in A$ with values in $[0,1]$ satisfying $h=0$ on $F$ and $h=1$ on $K$. Let $\varphi\colon  A \to \Bbb C$ be a multiplicative spectral function. 
   If either $\varphi$ is continuous or $1\notin {\rm span}({\rm ker}(\varphi))$, then  there exists a point $x_0\in X$ such that 
 $\varphi(f)=f(x_0)$ for all $f\in G(A)$ and all $f\in A$ with  $\overline{f}\in A$. 
   \end{theorem}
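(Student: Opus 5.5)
The plan is to reduce everything to Proposition \ref{Main-P} and to the first part of the proof of Lemma \ref{Main-lem}. The separation hypothesis will replace the Bishop property: it supplies cut-off functions in $A_+$ that localize $\varphi$ near the associated point $x_0$.

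\emph{Step 1: $\ch(A)=X$.} Given $x\in X$ and an open neighborhood $U$ of $x$, apply the hypothesis to $K=\{x\}$ and $F=X\setminus U$. This gives $h\in A$ with values in $[0,1]$, $h(x)=1=\|h\|_X$ and $h=0$ off $U$. Hence $x\in\Theta(A)\subseteq\ch(A)$. So Proposition \ref{Main-P} applies: with $x_0$ the associated point of $\varphi$, we have $|\varphi(f)|\le|f(x_0)|$ for all $f\in A$, and equality holds on $G(A)$.

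\emph{Step 2: $\varphi(g)=g(x_0)$ for all $g\in A_+$.} I will repeat the first paragraph of the proof of Lemma \ref{Main-lem}, which does not use the Bishop property.
\begin{itemize}
\item If $\varphi$ is continuous, note that $g+\frac1n\in\exp(A)$, so $\varphi(g+\frac1n)=g(x_0)+\frac1n$, and let $n\to\infty$.
\item If $1\notin{\rm span}(\ker\varphi)$, then $\ker\varphi=\ker\varphi_{x_0}$ by Proposition \ref{Main-P}(ii). Lemma \ref{lem-real} gives $\varphi(g)\in\{0,g(x_0)\}$. If $g(x_0)\neq0$, then $g\notin\ker\varphi$, so $\varphi(g)=g(x_0)$. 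If $g(x_0)=0$, then $\varphi(g)=0$ by Step 1.
\end{itemize}
Consequently, for $f$ with $\overline f\in A$, multiplicativity gives $|\varphi(f)|^2\ge|\varphi(f)\varphi(\overline f)|=|\varphi(|f|^2)|=|f(x_0)|^2$. Combined with Step 1, $|\varphi(f)|=|f(x_0)|$. By Step 1 this modulus identity also holds for all $f\in G(A)$.

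\emph{Step 3: localization (the key step).} Let $f$ be in either class. If $f(x_0)=0$, Step 1 gives $\varphi(f)=0$, so assume $f(x_0)=1$. For each $n$ choose, by hypothesis, $g_n\in A$ with values in $[0,1]$, $g_n=1$ on the closed ball $\overline B(x_0,1/n)$ and $g_n=0$ outside $B(x_0,2/n)$. By Step 2, $\varphi(g_n)=1$, so
\[
\varphi(f)=\varphi(fg_n)\in (fg_n)(X)\subseteq\{t f(x): t\in[0,1],\ d(x,x_0)\le 2/n\}.
\]
By continuity of $f$ at $x_0$, these sets shrink to the segment $\{t f(x_0): t\in[0,1]\}=[0,1]$. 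Therefore $\varphi(f)\in[0,1]$, and since $|\varphi(f)|=1$ by Step 2, we get $\varphi(f)=1=f(x_0)$.

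I expect the main care to be needed in Step 2, namely getting the modulus identity $|\varphi(f)|=|f(x_0)|$ (without the Bishop property there is no peaking trick). Once that holds, the cut-off functions $g_n$ pin $\varphi(f)$ to the radial segment through $f(x_0)$, and the proof closes immediately. The metric is used only to obtain a countable shrinking neighborhood base at $x_0$.
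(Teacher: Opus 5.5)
Your proof is correct. Up to the modulus identity $|\varphi(f)|=|f(x_0)|$ it runs parallel to the paper, which also obtains $\ch(A)=X$ from the separation hypothesis, applies Proposition~\ref{Main-P}, and reuses the first paragraph of Lemma~\ref{Main-lem}.

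There is one small slip in Step~2. The inequality $|\varphi(f)|^2\ge|\varphi(f)\varphi(\overline f)|$ presupposes $|\varphi(\overline f)|\le|\varphi(f)|$, which you have not shown. Write instead $|f(x_0)|^2=|\varphi(f)|\,|\varphi(\overline f)|\le|\varphi(f)|\,|f(x_0)|$, using Step~1 for $\overline f$.

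The genuine difference from the paper is the last step.

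The paper's route:
\begin{itemize}
\item It uses metrizability to build a nonnegative function $h_0$ peaking only at $x_0$.
\item It multiplies $h_0$ by cut-offs $h_n$ adapted to the level sets $\{|f|\le 1\}$ and $\{|f|\ge 1+\frac1n\}$, and writes $e^{i\theta}=\varphi(f)=\varphi(g_nf)=g_n(x_n)f(x_n)$ with $g_n=h_0h_n$.
\item A case distinction and a compactness argument then force $x_n\to x_0$, and hence $\theta=0$.
\end{itemize}
This imitates the Bishop-property trick of making $fg_n$ nearly a peaking function at $x_0$.

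Your route localizes in $X$ itself. Since $\varphi(g_n)=1$, the value $\varphi(f)=\varphi(fg_n)$ lies in $\{tf(x):0\le t\le 1,\ d(x,x_0)\le 2/n\}$, and these sets shrink to the segment $[0,1]\,f(x_0)$. This has three advantages:
\begin{itemize}
\item It is shorter.
\item It needs no peak point, since strong boundary points already give $\ch(A)=X$.
\item It yields the stronger fact that $\varphi(f)\in[0,1]\,f(x_0)$ for every $f\in A$, in both cases. The modulus identity is used only at the very end.
\end{itemize}

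Contrary to your closing remark, the metric is not needed at all. Choose an open $U\ni x_0$ with $|f-f(x_0)|<\varepsilon$ on $U$. Take a bump $g\in A$ with $0\le g\le 1$, $g(x_0)=1$ and $g=0$ off $U$ (apply the hypothesis with $K=\{x_0\}$ and $F=X\setminus U$). Then $\varphi(f)=\varphi(fg)$ lies within $\varepsilon$ of that closed segment. So your argument works on any compact Hausdorff space, and it uses the separation hypothesis only to produce nonnegative bump functions at points.
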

   \begin{proof}
   We first note that each point $x_0\in X$ is a peak point for $A$. Indeed, since $\{x_0\}=\bigcap_{n=1}^{\infty} U_n$ for some decreasing sequence $\{U_n\}_{n\in \Bbb N}$ of open neighborhoods of $x_0$, it follows from the hypotheses that for each $n\in \Bbb N$, there exists $f_n\in A$ such that $0\le f_n\le 1$, $f_n(x_0)=1$ and $f_n=0$ on $X\backslash U_n$. It is easy to see that for some appropriate $\alpha>0$, the function  $h_0= \alpha \sum_{n=1}^{\infty} \frac{f_n}{2^n \|f_n\|}$ is a (positive) peaking function of $A$ at the point $x_0$. 
 This  implies that each point in $X$ is a peak point  of $A$ and consequently  $\ch(A)=X$.  
 
     Using  Proposition \ref{Main-P}  we obtain a  point $x_0\in X$ such that $\varphi(e^f)=e^{f(x_0)}$ for all $f\in A$,   $|\varphi(f)|\le |f(x_0)|$ for all $f\in A$ and $|\varphi(f)|=|f(x_0)| $ whenever $f\in G(A)$. 
     We also conclude that either ${\rm ker}(\varphi)$ is a maximal ideal of $A$ or $1\in  {\rm span}({\rm ker}(\varphi))$. 
     
      Clearly  if $f\in A$ and $f(x_0)=0$, then $\varphi(f)=0=f(x_0)$.       
    As in the proof of Lemma \ref{Main-lem}  we have $\varphi(g)=g(x_0)$ for all $g\in A_+$.  
     Hence for each $f \in A$ with  $\overline{f}\in A$, we have again $|\varphi(f)|=|f(x_0)|$. Indeed, assuming that  $f(x_0)=1$, since $f\overline{f}\in A_+$   we have 
     \[1= f(x_0)\overline{f(x_0)}=\varphi(f\overline{f})=\varphi(f)\varphi(\overline{f}) \] 
 and consequently $|\varphi(f)|=1$.
 
 In the sequel we show that for each $f\in A$ with $|\varphi(f)|=|f(x_0)|$ we have $\varphi(f)=f(x_0)$, which clearly concludes the desired equality in both cases.  For this let  $f\in A$ and $|\varphi(f)|=|f(x_0)|$. We may assume that $f(x_0)=1$. Then    $\varphi(f)=e^{i \theta}$ for some $\theta \in [0,2\pi)$. Now for each $n \in \Bbb N$, choose, by assumption,  $h_n\in A$ with values in $[0,1]$ such that $h_n=1$ on $\{x\in X\colon  |f(x)|\le 1\}$ and $h_n=0$ on ${\{x\in X\colon  |f(x)|\ge  1+ \frac{1}{n}\} }$. 
  By the above argument, $x_0$ is a peak point of $A$ and the corresponding peaking function $h_0$ can be chosen to be nonnegative. In particular,  we have $\varphi(h_0)=h_0(x_0)=1$. 
   Setting $g_n=h_0h_n$ for $n\in \Bbb N$, we have 
 $\varphi(g_n)=g_n(x_0)=1$, since $g_n$ is also nonnegative. Now it follows from the hypotheses that 
    $$ e^{i \theta}=\varphi(g_n) \varphi(f)=\varphi(g_n f)= g_n(x_n) f(x_n)$$
    for some $x_n \in X$. Thus 
    \begin{equation}\label{eqxn}
e^{i \theta}= g_n(x_n) f(x_n) \qquad (n \in \Bbb N).
\end{equation}
    In the case  that there exists $n_0\in \Bbb N$ such that $|f(x_{n_0})| \le 1$,  we get 
    $$ e^{i \theta}= g_{n_0}(x_{n_0})f(x_{n_0})= h_0(x_{n_0}) f(x_{n_0}),$$ 
    which conclude that $|h_0(x_{n_0}) f(x_{n_0})|=1$, that is $|h_0(x_{n_0})|=1=|f(x_{n_0})|$. In particular, $x_{n_0}=x_0$, since $x_0$ is a peak point of $A$ with the peaking function $h_0$. Thus in this case we have 
    $$ \varphi(f)=e^{i \theta}=g_{n_0}(x_{n_0})f(x_{n_0})=g_{n_0}(x_0)f(x_0)=1,$$ 
    as desired. 
    So we assume  that $|f(x_n)|>1$ for all  $n\in \Bbb N$.  
    Clearly \eqref{eqxn} shows that for each $n\in \Bbb N$ we have $1< |f(x_n)|< 1+\frac{1}{n}$.
    Passing through a subsequence we may assume that $\{x_n\}$ converges to a point $x\in X$. Since $ e^{i \theta} =h_0(x_n) h_n(x_n) f(x_n)$ for each $n\in \Bbb N$, we have 
     $$1= |h_0(x_{n})|\, |h_n(x_n)|\, |f(x_n)| \le  |h_0(x_n)| (1+\frac{1}{n}).$$
    Tending $n$ to infinity it follows that $1\le |h_0(x)|$, that is $|h_0(x)|=1$ and consequently $x=x_0$. 
    Hence, by \eqref{eqxn} we get $\lim g_n(x_n)=e^{i \theta}$ and since for each  $n\in \Bbb N$ we have $g_n\ge 0$, it follows that $\theta=0$. Thus $\varphi(f)=1=f(x_0)$, as desired. 
            \end{proof}
      \begin{cor}
   Let $\varphi\colon  C^1([0,1]) \to \Bbb C$ be  a  multiplicative spectral function. Then the following statements are equivalent: 
    
    (i) $\varphi$ is a character. 
    
    (ii) $\varphi$ is continuous.
    
    (iii) $1\notin {\rm span}({\rm ker}(\varphi)$. 
       \end{cor}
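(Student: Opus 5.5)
The plan is to deduce the corollary from Theorem \ref{Main2}. The first step is to check that $A=C^1([0,1])$ meets its hypotheses. The algebra $C^1([0,1])$ is a natural Banach function algebra on the compact metric space $[0,1]$; its maximal ideal space is $[0,1]$. For the separation property, let $F,K$ be disjoint closed subsets of $[0,1]$, take a continuous Urysohn function $u$ with $u=0$ on $F$ and $u=1$ on $K$, and smooth it. More concretely, let $\delta>0$ be less than half the distance between $F$ and $K$. Convolve the indicator of the $\delta$-neighbourhood of $K$ with a smooth mollifier supported in $(-\delta/2,\delta/2)$ and restrict to $[0,1]$. The result is a $C^\infty$ function $h$ with values in $[0,1]$, equal to $1$ on $K$ and $0$ on $F$, so the hypothesis of Theorem \ref{Main2} holds.

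Next come the implications. (i)$\Rightarrow$(ii) holds because every character on a Banach algebra is automatically continuous. (i)$\Rightarrow$(iii) holds because a character is linear, so $\ker(\varphi)$ is a proper linear subspace, $\mathrm{span}(\ker(\varphi))=\ker(\varphi)$, and $\varphi(1)=1$ gives $1\notin\ker(\varphi)$. The substantive direction is (ii) or (iii) $\Rightarrow$ (i). Under either assumption, Theorem \ref{Main2} gives a point $x_0$ with $\varphi(f)=f(x_0)$ for all $f\in G(A)$ and for all $f\in A$ with $\overline{f}\in A$.

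The key observation is that $C^1([0,1])$ is self-adjoint: if $f\in C^1([0,1])$ then $\overline{f}\in C^1([0,1])$, with $(\overline{f})'=\overline{f'}$. So the conclusion of Theorem \ref{Main2} covers every $f\in A$. Hence $\varphi=\varphi_{x_0}$ on all of $A$, which is linear and multiplicative and therefore a character, proving (i).

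The only point needing care is the separation hypothesis, that is, producing the $C^1$ Urysohn-type function; that is handled by the mollification above. Everything else is direct bookkeeping from Theorem \ref{Main2}.
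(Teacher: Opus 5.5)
Your proposal is correct and follows the route the paper intends: the corollary comes from Theorem \ref{Main2} once one checks that $C^1([0,1])$ is natural, satisfies the $C^1$ Urysohn-type separation hypothesis (your mollifier construction works, and the case of an empty $F$ or $K$ is handled by a constant), and is closed under complex conjugation, and you verify each of these points, which the paper leaves implicit. The easy implications (i)$\Rightarrow$(ii) and (i)$\Rightarrow$(iii) are also handled correctly.
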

       



   {\bf Acknowledgement.}  This work is based upon  research funded by Iran National Science Foundation (INSF) under project No. 4031504.

   \end{document}